\newtheorem{thm}{Theorem}
\newtheorem{lem}{Lemma}
\newtheorem{ass}{Assumption}
\newtheorem{prop}{Proposition}
\theoremstyle{definition}
\newtheorem{example}{Example}
\newcommand{\abs}[1]{\lvert #1 \rvert}
\newcommand{\norm}[1]{\lVert #1 \rVert}
\newcommand{\floor}[1]{\lfloor #1 \rfloor}
\newcommand{\ve}[2]{\langle #1 ,  #2 \rangle}   
\newcommand{\eqdef}{\stackrel{\text{def}}{=}}
\newcommand{\R}{\mathbf{R}}
\newcommand{\Reg}{\psi}
\newcommand{\pp}{u}
\newcommand{\vv}{v}
\newcommand{\Exp}{\mathbf{E}}
\newcommand{\Prob}{\mathbf{P}}
\newcommand{\Srv}{\hat{S}}
\DeclareMathOperator{\dom}{dom}
\title{Accelerated, Parallel and Proximal Coordinate Descent}
\author{Olivier Fercoq \footnote{School of Mathematics, The University of Edinburgh, United Kingdom (e-mail: olivier.fercoq@ed.ac.uk)} \qquad \qquad  Peter Richt\'{a}rik \footnote{School of Mathematics, The University of Edinburgh, United Kingdom (e-mail: peter.richtarik@ed.ac.uk) \qquad
The work of both authors was supported by the EPSRC grant EP/I017127/1 (Mathematics for Vast Digital Resources) and 
by the Centre for Numerical Algorithms and Intelligent Software (funded by EPSRC grant EP/G036136/1 and the Scottish Funding Council). The work of P.R.\ was also supported by EPSRC grant EP/K02325X/1 (Accelerated Coordinate Descent Methods for Big Data Problems) and by the Simons Institute for the Theory of Computing at UC Berkeley.}}
\date{December 19, 2013 (updated: February 2014)}
\begin{document}

\maketitle

\begin{abstract} We propose a new stochastic coordinate descent method for minimizing the sum of convex functions each of which depends on a small number of coordinates only. Our method (APPROX) is simultaneously Accelerated, Parallel and PROXimal; this is the first time such a method is proposed. In the special case when the number of processors is equal to the number of coordinates, the method converges at the rate $2\bar{\omega}\bar{L} R^2/(k+1)^2 $, where $k$ is the iteration counter, $\bar{\omega}$ is an \emph{average} degree of separability of the loss function, $\bar{L}$ is the \emph{average} of  Lipschitz constants associated with the coordinates and  individual functions in the sum, and $R$ is the distance of the initial point from the minimizer. We show that the  method can be implemented without the need to perform full-dimensional vector operations, which is  the major bottleneck of accelerated coordinate descent.  The fact that the method depends on the average degree of separability, and not on the maximum degree of separability, can be attributed to the use of new safe large stepsizes, leading to improved expected separable overapproximation (ESO). These are of independent interest and can be utilized in all existing parallel stochastic coordinate descent algorithms based on the concept of ESO. 
\end{abstract}

\section{Introduction}

Developments in computing technology and  ubiquity of digital devices resulted in an increased interest in solving optimization problems of extremely big sizes. Applications can be found in all areas of human endeavor where data is available, including the internet, machine learning, data science and scientific computing. The size of these problems is so large that it is necessary to decompose the problem into smaller, more manageable, pieces.  Traditional approaches, where it is possible to rely on full-vector operations in the design of an iterative scheme, must be revisited.

Coordinate descent methods \cite{Nesterov:2010RCDM, RT:UCDC} appear as a very popular class of algorithms for such problems as they can break down the problem into smaller pieces, and can take advantage of sparsity patterns in the data. With big data problems it is necessary to design algorithms able to utilize modern parallel computing architectures. This resulted in an interest
in  parallel \cite{RT:PCDM, takac2013mini,FR:spcdm,RT:2013optimal} and distributed \cite{RT:2013distributed} coordinate descent methods.

In this work we focus on the solution of convex optimization problems with a huge number of variables of the form
\begin{equation}\label{eq:main-intro}\min_{x\in \R^N}\;\; f(x) + \Reg(x).\end{equation}
Here $x = (x^{(1)},\dots,x^{(n)}) \in \R^N$ is a decision vector composed of $n$ blocks, with $x^{(i)}\in \R^{N_i}$, \begin{equation}\label{eq:shs5s9s}\textstyle{f(x)=\sum_{j=1}^m f_j(x)},\end{equation} where $f_j$ are smooth convex functions, and $\Reg$ is a block separable regularizer (e.g., $L1$ norm).

In this work we make the following three main contributions:

\begin{enumerate}
\item We design and analyze the first stochastic coordinate descent method which is simultaneously \emph{accelerated}, \emph{parallel} and \emph{proximal}. In fact,  we are not aware of any published results on accelerated coordinate descent which would either be proximal \emph{or} parallel. 

 Our method is \emph{accelerated} in the sense that it achieves an $O(1/k^2)$ convergence rate, where $k$ is the iteration counter. The first \emph{gradient} method with this convergence rate is due to Nesterov~\cite{nesterov1983method}; see also \cite{tseng2008accelerated, beck2009fista}.
Accelerated stochastic coordinate descent method, for convex minimization without constraints, was originally proposed in 2010 by Nesterov~\cite{Nesterov:2010RCDM}.

\begin{table}[!h]
\begin{center}
{
\footnotesize
\begin{tabular}{|l||c|c|c||c|}
\hline
{\bf Paper} & {\bf Proximal} &{\bf Parallel} & {\bf Accelerated } &
{\bf  Notable feature} \\
 \hline
Leventhal \& Lewis, 2008 
\cite{Leventhal:2008:RMLC} 
& $\times$ & $\times$ & $\times$ & quadratic $f$\\
S-Shwartz \& Tewari, 2009 
\cite{ShalevTewari09} 
& $\ell_1$ & $\times$ & $\times$ & 1st $\ell_1$-regularized\\
Nesterov,  2010 
\cite{Nesterov:2010RCDM} 
& $\times$  & $\times$ & YES & 1st block, 1st accelerated \\
Richt\'{a}rik \& Tak\'{a}\v{c},  2011 
\cite{RT:UCDC} 
& YES & $\times$ & $\times$ & 1st general proximal \\
Bradley et al, 2012 
\cite{Bradley:PCD-paper} 
& $\ell_1$ & YES & $\times$ & $\ell_1$-regularized parallel\\
Richt\'{a}rik \& Tak\'{a}\v{c}, 2012 
\cite{RT:PCDM}  
& YES & YES & $\times$ & 1st general parallel\\
S-Shwartz \& Zhang, 2012 
\cite{Proximal-dual-Coord-Ascent} 
& YES & $\times$ & $\times$ &  1st primal-dual\\
Necoara et al, 2012 \cite{Necoara:Coupled} & $\times$ & $\times$ & $\times$ & 2-coordinate descent \\
Tak\'{a}\v{c} et al, 2013 
\cite{takac2013mini} 
& $\times$ & YES & $\times$ & 1st primal-d. \& parallel\\
Tappenden et al, 2013 
\cite{GRT:Inexact} 
& YES & $\times$ & $\times$ & 1st inexact\\
Necoara \& Clipici, 2013 \cite{Necoara:parallelCDM-MPC} & YES & $\times $ & $\times$ & coupled constraints\\
Lin \& Xiao, 2013 
\cite{LX:2013-improvedCDM}
& $\times$ & $\times$ & YES & improvements\\
Fercoq \& Richt\'{a}rik,  2013 
\cite{FR:spcdm} 
& YES  & YES & $\times$ & 1st nonsmooth $f$\\
Lee \& Sidford, 2013 
\cite{lee2013efficient} 
& $\times$ & $\times$ & YES &  1st efficient accelerated \\
Richt\'{a}rik \& Tak\'{a}\v{c}, 2013 \cite{RT:2013distributed} 
& YES & YES & $\times$ & 1st distributed\\
Liu et al, 2013 \cite{Wright:2013-async_par_cdm}  & $\times$ & YES & $\times$ & asynchronous\\
Richt\'{a}rik \& Tak\'{a}\v{c}, 2013 
\cite{RT:2013optimal} 
& $\times$ & YES  & $\times$ &  1st parallel  nonuniform \\
\hline
\textbf{This paper} & {\bf YES} & {\bf YES} & {\bf YES} & {\bf 3 $\times$ YES }\\
\hline
 \end{tabular}
 }
 \end{center}
 \caption{\footnotesize Selected recent papers  analyzing the iteration complexity of \emph{stochastic} coordinate descent methods. Our algorithm is simultaneously proximal, parallel and accelerated. In the last column we highlight a single notable feature, necessarily chosen subjectively, of each work.   }
 \label{tbl:3xYES1}
\end{table}

Various variants of proximal and parallel (but non-accelerated) stochastic coordinate descent methods were proposed \cite{Bradley:PCD-paper,RT:PCDM,FR:spcdm,RT:2013distributed}. In Table~\ref{tbl:3xYES1} we provide a  list\footnote{This list is necessarily incomplete, it was not our goal to be comprehensive. For a somewhat more substantial review of these and other works we refer the reader to \cite{RT:PCDM,FR:spcdm}.} of some recent research papers proposing and analyzing  \emph{stochastic} coordinate descent methods.  The table substantiates our observation that while the proximal setting is standard in the literature, parallel methods are much less studied, and finally, there  is just a handful of papers dealing with accelerated variants. 

\item We propose \emph{new  stepsizes} for parallel coordinate descent methods, based on a new expected separable overapproximation (ESO). These stepsizes can for some classes of problems (e.g., $f_j$=quadratics), be much larger than the stepsizes proposed for the (non-accelerated) parallel coordinate descent method (PCDM) in \cite{RT:PCDM}. Let $\omega_j$ be the number of of blocks function $f_j$ depends on. The stepsizes, and hence the resulting complexity, of PCDM, depend on the quantity $\omega=\max_j \omega_j$. However, our stepsizes take all the values $\omega_j$ into consideration and the result of this is complexity that depends on a data-weighted average $\bar{\omega}$ of the values $\omega_j$. Since $\bar{\omega}$ can be much smaller than $\omega$, our stepsizes result in dramatic acceleration for our method and other methods whose analysis is based on an ESO \cite{RT:PCDM,FR:spcdm, RT:2013distributed}.

\item We identify a large subclass of  problems of the form \eqref{eq:main-intro} for which the \emph{full-vector operations} inherent in accelerated methods \emph{can be eliminated.} This contrasts with Nesterov's accelerated coordinate descent scheme \cite{Nesterov:2010RCDM}, which is impractical due to this bottleneck. Having established his convergence result, Nesterov remarked \cite{Nesterov:2010RCDM}  that:

\begin{quote}
{\footnotesize
``However, for some applications [...] the complexity of one iteration of the accelerated scheme is rather high since for computing $y_k$ it needs to operate with full-dimensional vectors.''
}
\end{quote} 

Subsequently, in part due to these issues, the work of the community focused on simple methods as opposed to accelerated variants. For instance, Richt\'{a}rik \& Tak\'{a}\v{c}   \cite{RT:UCDC} use Nesterov's observation to justify their focus  on non-accelerated methods in their work on coordinate descent methods in the proximal/composite setting.

Recently, Lee \& Sidford \cite{lee2013efficient} were able to avoid full dimensional operations in the case of minimizing a convex quadratic without constraints, by a careful modification of Nesterov's method. This was achieved by introducing  an extra sequence of iterates and  observing that for quadratic functions it is possible to compute partial derivative of $f$ evaluated at a linear combination of full dimensional vectors without ever forming the combination. 
We  extend the ideas of Lee \& Sidford \cite{lee2013efficient} to our general setting \eqref{eq:main-intro} in the case when  $f_j(x) = \phi_j(a_j^Tx)$, where $\phi_j$ are scalar convex functions with Lipschitz derivative and the vectors $a_j$ are block-sparse.

 
\end{enumerate}




\paragraph{Contents.}

The rest of the paper is organized as follows. We start by describing  new stepsizes for parallel coordinate descent methods,  based on novel assumptions, and compare them with existing stepsizes (Section~\ref{SEC:STEPSIZES}). We then describe our algorithm and state and comment on the main complexity result (Section~\ref{SEC:ACD}). Subsequently,  we give a proof of the result (Section~\ref{SEC:COMPLEXITY}). We then describe an efficient implementation of our method, one that does not require the computation of full-vector operations (Section~\ref{SEC:EFFICIENT_IMPLEMENTATION}), and finally comment on our numerical experiments (Section \ref{SEC:NUMERICS}).

\paragraph{Notation.} It will be convenient to define natural operators acting between the spaces $\R^N$ and $\R^{N_i}$. In particular,  we will often wish to lift a block $x^{(i)}$ from $\R^{N_i}$ to $\R^N$, filling the coordinates corresponding to the remaining blocks with zeros. Likewise, we will project $x\in \R^{N}$ back into $\R^{N_i}$. We will now formalize these operations.

Let $U$ be the $N\times N$ identity matrix, and let $U=[U_1,U_2,\dots,U_n]$ be its decomposition into  column submatrices $U_i \in \R^{N \times N_i}$. For $x\in \R^N$, let $x^{(i)}$ be the block of variables corresponding to the columns of $U_i$, that is, $x^{(i)} = U_i^T x \in \R^{N_i}$, $i=1,2,\dots,n$. Any vector $x \in \R^N$ can be written, uniquely, as $x = \sum_{i=1}^n U_i x^{(i)}$. For $h \in \R^N$ and $\emptyset \neq S\subseteq [n]\eqdef \{1,2,\dots,n\}$, we write
\begin{equation}
\label{eq:h_S}h_{[S]} = \sum_{i\in S} U_i h^{(i)}.\end{equation}
In words, $h_{[S]}$ is a vector in $\R^N$ obtained from $h\in \R^N$ by zeroing out the blocks that do not belong to $S$. For convenience, we will also write
\begin{equation}\nabla_i f(x) \eqdef (\nabla f(x))^{(i)} = U_i^T \nabla f(x) \in \R^{N_i} \label{eq:syshsj9s}\end{equation}
for the vector of partial derivatives of $f$ corresponding to coordinates belonging to block $i$.

With each block $i\in[n]$ we associate a positive definite matrix $B_i \in \R^{N_i \times N_i}$ and a scalar $\vv_i>0$, and equip $\R^{N_i}$ and $\R^N$ with the norms
\begin{equation}\label{eq:norm_block} \textstyle \|x^{(i)}\|_{(i)}\eqdef\ve{B_i x^{(i)}}{x^{(i)}}^{1/2}, \qquad \|x\|_{\vv}  \eqdef \left(\sum_{i=1}^n \vv_i \|x^{(i)}\|_{(i)}^2\right)^{1/2}.
\end{equation}
The corresponding conjugate norms, defined by $\|s\|^*=\max\{\ve{s}{x}:\|x\|\leq 1\}$, are given by
\begin{equation}\label{eq:norm_block_conj} \textstyle \|x^{(i)}\|^*_{(i)}\eqdef\ve{B_i^{-1} x^{(i)}}{x^{(i)}}^{1/2}, \qquad \|x\|^*_{\vv}  = \left(\sum_{i=1}^n \vv_i^{-1} \left(\|x^{(i)}\|_{(i)}^*\right)^2\right)^{1/2}.
\end{equation}
We also write $\|\vv\|_1 = \sum_i |\vv_i|$.

\section{Stepsizes for parallel coordinate descent methods}\label{SEC:STEPSIZES}

The  framework for designing and analyzing (non-accelerated) parallel coordinate descent methods, developed by Richt\'{a}rik \& Tak\'{a}\v{c} \cite{RT:PCDM}, is  based on the notions of  \emph{block sampling}  and   \emph{expected separable overapproximation (ESO)}. We now briefly review this framework  as our accelerated method is cast in it, too. Informally, a block sampling is the random law describing the \emph{selection of blocks}  at each iteration. An ESO is an inequality, involving $f$ and $\hat{S}$, which is used to \emph{compute updates} to selected blocks. The complexity analysis in our paper is based on the following generic assumption.


\begin{ass}[Expected Separable Overapproximation \cite{RT:PCDM,FR:spcdm}] \label{ass:ESO} 
We assume that:

\begin{enumerate}
\item $f$ is convex and differentiable. 

\item $\hat{S}$ is a uniform block sampling. That is, $\hat{S}$ is a random subset of $[n]= \{1,2,\dots,n\}$ with the property\footnote{It is easy to see that if $\hat{S}$ is a uniform sampling, then  necessarily, $\Prob(i\in \hat{S}) = \frac{\Exp|\hat{S}|}{n}$ for all $i \in [n]$.
} that $\Prob(i \in \hat{S})=\Prob(j\in \hat{S})$ for all $i,j \in [n]$. Let $\tau=\Exp[|\hat{S}|]$. 

\item There are computable constants  $\vv=(\vv_1,\dots,\vv_n)>0$ for which the pair $(f,\hat{S})$ admits the Expected Separable Overapproximation (ESO):
\begin{equation}\label{eq:ESO}\Exp\left[f(x+h_{[\hat{S}]})\right] \leq f(x) + \frac{\tau}{n}\left(\ve{\nabla f(x)}{h} + \frac{1}{2}\|h\|_{\vv}^2\right), \qquad x,h \in \R^N.
\end{equation}
\end{enumerate}
If the above inequality holds, for simplicity we will write\footnote{In \cite{RT:PCDM}, the authors write $\tfrac{\beta}{2}\|h\|^2_w$  instead of $\tfrac{1}{2}\|h\|_{\vv}^2$. This is because 
they study families of samplings $\hat{S}$, parameterized by $\tau$, for which $w$ is fixed and all changes can thus be captured in the constant $\beta$. Clearly, the two definitions are interchangeable as one can choose $\vv=\beta w$. Here we will need to compare weights which are not linearly dependent, hence the simplified notation.} $(f,\hat{S})\sim \mathrm{ESO}(\vv)$. 
\end{ass}

In the context of parallel coordinate descent methods, uniform  block samplings and inequalities \eqref{eq:ESO} involving such samplings were introduced and systematically studied by Richt\'{a}rik \& Tak\'{a}\v{c} \cite{RT:PCDM}. An ESO inequality for a uniform \emph{distributed} sampling was developed in \cite{RT:2013distributed} and that \emph{nonuniform} samplings and ESO, together with a parallel coordinate descent method based on such samplings, was proposed in \cite{RT:2013optimal}.

Fercoq \& Richt\'{a}rik  \cite[Theorem 10]{FR:spcdm} observed that inequality \eqref{eq:ESO} is equivalent to requiring that the gradients  of the functions  \[\hat{f}_x: h \mapsto  \Exp\left[f(x+h_{[\hat{S}]})\right], \qquad x \in \R^N,\]  be Lipschitz at $h=0$, uniformly in $x$, with constant $\tau/n$, with respect to the norm $\|\cdot\|_{\vv}$. Equivalently, the Lipschitz constant is $L^{\hat{f}}$ with respect to the norm $\|\cdot\|_{\tilde{\vv}}$, where
\[L^{\hat{f}}=\frac{\tau\| \vv \|_1}{n^2}, \qquad \tilde{\vv} \eqdef n \frac{\vv}{\|\vv\|_1}.\]
The change of norms is done so as to enforce that the weights in the norm sum to $n$, which means that different ESOs can be compared using the constants $L^{\hat{f}}$.
The above observations are useful in understanding what the ESO inequality encodes: By moving from $x$ to \[x_+ = x+h_{[\hat{S}]},\] one is taking a step in a random subspace of $\R^N$ spanned by the blocks belonging to $\hat{S}$. If $\tau\ll n$, which is often the case in big data problems\footnote{In fact, one may define a ``big data'' problem by requiring that the number of parallel processors $\tau$ available for optimization is much smaller than the dimension $n$ of the problem.}, the step is confined to a \emph{low-dimensional} subspace of $\R^N$. It turns out that for many classes of functions arising in applications, for instance for functions exhibiting certain sparsity or partial separability patterns,  it is the case that the gradient of $f$ varies much more slowly in such subspaces, on average,  than it does in $\R^N$. This in turn would imply that updates $h$ based on minimizing the right hand side of \eqref{eq:ESO} would produce larger steps, and eventually lead to faster convergence. 



\subsection{New model}

Consider $f$  of the form \eqref{eq:shs5s9s}, i.e., 
\[f(x) =\sum_{j=1}^m f_j(x),\]
where $f_j$ depends on blocks $i \in C_j$ only. Let $\omega_j=|C_j|$, and $\omega=\max_j \omega_j$.

%
%
%

\begin{ass}\label{ASS:4}
The functions $\{f_j\}$  have  block-Lipschitz gradient with constants $L_{ji}\geq 0$. That is, for all $j=1,2,\dots,m$ and $i=1,2,\dots,n$,  
\begin{equation}\label{eq:newass}\|\nabla_i f_j(x+ U_i t)-\nabla_i f_j(x)\|_{(i)}^* \leq L_{ji} \|t\|_{(i)}, \qquad x \in \R^N, \; t \in \R^{N_i}.
\end{equation}
\end{ass}

Note that, necessarily, \begin{equation}L_{ji} = 0 \qquad  \text{whenever} \qquad i \notin C_j.\end{equation}

Assumption~\ref{ASS:4} is \emph{stronger} than the assumption considered in \cite{RT:PCDM}. Indeed, in \cite{RT:PCDM} the authors only assumed that the \emph{sum} $f$,  as opposed to the individual functions $f_j$,  has a block-Lipschitz gradient, with constants $L_1,\dots, L_n$. That is,

\[\|\nabla_i f(x+U_i t) -\nabla_i f(x)\|_{(i)}^* \leq L_i \|t\|_{(i)}.\]

It is easy to see that if the stronger condition is satisfied, then the weaker one is also satisfied with $L_i$ no worse than
$L_i \leq \sum_{j=1}^m L_{ji}$.

\subsection{New ESO}
\label{sec:neweso}

We now derive an ESO inequality for functions satisfying Assumption~\ref{ASS:4} and $\tau$-nice sampling $\hat{S}$. That is,  $\hat{S}$ is a random subset of $[n]$ of cardinality $\tau$, chosen uniformly at random. One can derive similar bounds for all uniform samplings considered in \cite{RT:PCDM} using the same approach. 

\begin{thm} \label{thm:newESO} Let $f$ satisfy Assumption~\ref{ASS:4}. 

\begin{itemize}
\item[(i)] If $\hat{S}$ is a $\tau$-nice sampling, then for all $x,h\in \R^N$,
\begin{equation}\label{eq:PS2}\Exp \left[f(x+ h_{[\hat{S}]})\right] \leq f(x) + \frac{\tau}{n}\left(\ve{\nabla f(x)}{h} + \frac{1}{2}\|h\|_{\vv}^2\right),\end{equation}
where 
\begin{equation}\label{eq:sjhs6453}\vv_{i} \eqdef \sum_{j=1}^m \beta_j L_{ji} = \sum_{j: i \in C_j} \beta_j L_{ji}, \qquad i=1,2,\dots,n,\end{equation}
\[\beta_j \eqdef 1+ \frac{(\omega_j-1)(\tau-1)}{\max\{1,n-1\}}, \qquad j=1,2,\dots,m.\]
That is, $(f,\hat{S}) \sim ESO(\vv)$.

\item[(ii)] Moreover, for all $x,h\in \R^N$ we have
\begin{equation}\label{eq:DSO000}
f(x+h) \leq f(x) + \ve{\nabla f(x)}{h} + \frac{\bar{\omega}\bar{L}}{2}\|h\|_{w}^2,
\end{equation}
where 
\begin{equation}\label{eq:shsths} \bar{\omega} \eqdef \sum_j \omega_j \frac{\sum_i L_{ji}}{\sum_{k,i} L_{ki}}, \qquad \bar{L} \eqdef \frac{\sum_{ji} L_{ji}}{n}, \qquad w_i \eqdef \frac{n}{\sum_{j,i}\omega_j L_{ji}}\sum_j \omega_j L_{ji}.\end{equation}

Note that $\bar{\omega}$ is a data-weighted \emph{average} of the values $\{\omega_j\}$ and that $\sum w_i = n$. 
\end{itemize}

\end{thm}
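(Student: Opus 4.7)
I will first prove part (i); part (ii) then follows as the deterministic specialization $\tau=n$. By linearity of expectation and the decomposition $f=\sum_j f_j$, it suffices to establish the ESO separately for each summand $f_j$ with the per-summand weights $\vv_i^j \eqdef \beta_j L_{ji}$, since the total weights $\vv_i=\sum_j\beta_j L_{ji}$ then follow by summing over $j$. The key observation is that since $f_j$ depends only on the blocks in $C_j$, the only randomness that matters is through $\hat S \cap C_j$.

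The central technical step is a deterministic separable overapproximation (DSO) for each $f_j$: for every $u \in \R^N$, writing $T \eqdef \mathrm{supp}(u) \cap C_j$,
\begin{equation*}
f_j(x+u) \leq f_j(x) + \ve{\nabla f_j(x)}{u} + \frac{|T|}{2}\sum_{i \in T} L_{ji}\,\|u^{(i)}\|_{(i)}^2.
\end{equation*}
Assumption~\ref{ASS:4} implies that the diagonal Hessian blocks satisfy $\nabla^2_{ii} f_j(y) \preceq L_{ji} B_i$ (a standard mollification handles the merely $C^1$ case). Positivity of the full Hessian then bounds the off-diagonal blocks via a $2{\times}2$ Cauchy--Schwarz argument, $|\ve{u^{(i)}}{\nabla^2_{ik} f_j(y)\, u^{(k)}}| \leq \sqrt{L_{ji} L_{jk}}\,\|u^{(i)}\|_{(i)}\|u^{(k)}\|_{(k)}$. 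Summing over $i,k \in T$ gives $u^\top \nabla^2 f_j(y)\, u \leq \bigl(\sum_{i \in T}\sqrt{L_{ji}}\,\|u^{(i)}\|_{(i)}\bigr)^2$, and one more Cauchy--Schwarz on the $|T|$ terms produces the factor $|T|$. Two integrations along the segment from $x$ to $x+u$ complete the DSO.

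Applying the DSO with $u = h_{[\hat S]}$ (so $|T| = |\hat S \cap C_j|$) and taking expectations, the linear term gives $(\tau/n)\ve{\nabla f_j(x)}{h}$. For the quadratic term I use the two identities for a $\tau$-nice sampling, $\Prob(i \in \hat S) = \tau/n$ and $\Prob(\{i,k\}\subseteq \hat S) = \tau(\tau-1)/(n(n-1))$ for $i\neq k$, to compute for each $i \in C_j$
\begin{equation*}
\Exp\bigl[|\hat S \cap C_j|\,\mathbf{1}_{i\in \hat S}\bigr] = \frac{\tau}{n} + (\omega_j-1)\,\frac{\tau(\tau-1)}{n\,\max\{1,n-1\}} = \frac{\tau}{n}\beta_j,
\end{equation*}
which delivers exactly the weight $\beta_j L_{ji}$ in front of $\|h^{(i)}\|_{(i)}^2$. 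Summing over $j$ proves (i). For (ii), setting $\tau=n$ makes $\hat S = [n]$ deterministically and $\beta_j = \omega_j$, so the inequality reduces to $f(x+h)\leq f(x)+\ve{\nabla f(x)}{h} + \tfrac12\sum_i \bigl(\sum_j \omega_j L_{ji}\bigr)\|h^{(i)}\|_{(i)}^2$; the one-line algebraic identities $\bar\omega\bar L\, w_i = \sum_j \omega_j L_{ji}$ and $\sum_i w_i = n$ then rewrite this as $\tfrac{\bar\omega\bar L}{2}\|h\|_w^2$. The main obstacle is the DSO step: extracting the off-diagonal Hessian bound from the purely block-diagonal Assumption~\ref{ASS:4} is where the real work lies, as everything downstream is routine bookkeeping with moments of a $\tau$-nice sampling.
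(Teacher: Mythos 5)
Your proposal is correct, but the route through the key inequality differs from the paper's. Both proofs reduce to establishing the per-summand ESO $(f_j,\hat S)\sim ESO(\beta_j L_{j:})$ and summing over $j$, and your moment computation $\Exp\bigl[|\hat S\cap C_j|\,\mathbf{1}_{i\in\hat S}\bigr]=\tfrac{\tau}{n}\beta_j$ is an equivalent repackaging of the identity $\Exp[|C_j\cap\hat S|^2]=\tfrac{\tau\omega_j\beta_j}{n}$ that the paper imports from \cite{RT:PCDM}. Where you genuinely diverge is in how the factor $|T|$ (respectively $|C_j\cap\hat S|$) enters: you prove a deterministic separable overapproximation by going to second order, bounding the diagonal Hessian blocks via Assumption~\ref{ASS:4}, controlling the off-diagonal blocks through positive semidefiniteness of $\nabla^2 f_j$ and a $2\times 2$ Cauchy--Schwarz argument, and integrating twice. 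The paper instead stays entirely first-order: it applies Jensen's inequality to the convex function $\hat f_j(h)=f_j(x+h)-f_j(x)-\ve{\nabla f_j(x)}{h}$ evaluated at the convex combination $\tfrac1k\sum_{i\in C_j\cap\hat S}kU_ih^{(i)}$, and then only needs the single-block consequence \eqref{eq:shs6shs} of Assumption~\ref{ASS:4}. The trade-off: your DSO is a clean deterministic statement of independent interest (it makes the role of the support size $|T|$ transparent and immediately yields part (ii) without reference to sampling), but it requires twice differentiability plus a mollification argument and explicitly invokes convexity of each $f_j$ through the PSD Hessian; the paper's Jensen trick works directly for $C^1$ convex $f_j$ with no second-order machinery. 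Both arguments are valid under the paper's standing assumption that the $f_j$ are smooth and convex, so this is a legitimate alternative proof rather than a gap.
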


\begin{proof}
Statement (ii) is a special case of (i) for $\tau=n$ (notice that $\bar{\omega}\bar{L}w = \vv$). We hence only need to prove (i). A well known consequence of \eqref{eq:newass} is 
\begin{equation}\label{eq:shs6shs}f_j(x + U_i t) \leq f_j(x) + \ve{\nabla_i f_j (x)}{t} + \frac{L_{ji}}{2}\|t\|_{(i)}^2, \qquad x \in \R^N,\; t \in \R^{N_i}.\end{equation}
We first claim that for all $i$ and $j$,
\begin{equation}\label{eq:PS1}\Exp \left[ f_j(x+ h_{[\hat{S}]})  \right] \leq f_j(x) + \frac{\tau}{n}\left(\ve{\nabla f_j(x)}{h} + \frac{\beta_j}{2}\|h\|_{L_{j:}}^2\right), \end{equation}
where $L_{j:} = (L_{j1},\dots,L_{jn}) \in \R^n$. That is, $(f_j,\hat{S}) \sim ESO(\beta_j L_{j:})$.   Equation~\eqref{eq:PS2} then follows  by adding up\footnote{At this step we could have also simply applied Theorem~10 from \cite{RT:PCDM}, which give the formula for an ESO for a conic combination of functions given ESOs for the individual functions. The proof, however, also amounts to simply adding up the inequalities.}  the  inequalities \eqref{eq:PS1} for all $j$. Let us now prove the claim.\footnote{This claim is a special case of Theorem 14 in \cite{RT:PCDM} which gives an ESO bound for a \emph{sum} of functions $f_j$ ( here we only have a single function). We include the proof as in this special case it more straightforward.} We fix $x$ and define \begin{equation}\label{eq:js0s6sh}\hat{f}_j(h) \eqdef f_j(x+h)-f_j(x)-\ve{\nabla f_j(x)}{h}.\end{equation} Since
\begin{eqnarray*}
 \Exp \left[\hat{f}_j(h_{[\Srv]})\right]
&\overset{\eqref{eq:js0s6sh}}{=}&
 \Exp \left[f_j(x+h_{[\Srv]})-f_j(x)- \ve{\nabla f_j(x)}{h_{[\Srv]}} \right]\\
 & \stackrel{\eqref{eq:0978098}}{=}&
 \Exp\left[f_j(x+h_{[\Srv]})\right]
       -f_j(x)-\tfrac{\tau}{n} \ve{\nabla f_j(x)}{h},
\end{eqnarray*}
it now only remains to show that

\begin{equation}\label{eq:aksoio4209809878}
 \Exp \left[\hat{f}_j(h_{[\Srv]})\right]\leq
 \tfrac{\tau \beta_j}{2 n}
\|h\|_{L_{j:}}^2.
\end{equation}
We now adopt the convention that  expectation conditional on an event which happens with probability 0 is equal to 0. Let $\eta_j \eqdef |C_j \cap \Srv|$, and using this convention, we can write
\begin{eqnarray}
 \Exp \left[\hat{f}_j(h_{[\Srv]})\right] 
&=&
    \sum_{k=0}^n \Prob(\eta_j = k)
      \Exp \left[\hat{f}_j(h_{[\Srv]}) \;|\; \eta_j=k\right].\label{eq:8893298d9}
\end{eqnarray}

For any $k\geq 1$ for which $\Prob(\eta_j =k)>0$,  we now use use convexity of $\hat{f}_j$ to write
\begin{eqnarray}
\Exp\left[\hat{f}_j(h_{[\Srv]}) \;|\; \eta_j = k\right]
&=&
  \Exp
    \left[\left.\hat{f}_j \left(\tfrac{1}{k} \sum_{i \in C_j \cap \Srv} k U_i h^{(i)} \right) \right. \;|\; \eta_j = k\right] \notag \\
&\leq&
  \Exp
    \left[\left.
      \tfrac{1}{k}  \sum_{i \in C_j\cap \Srv}
          \hat{f}_j \left(  k U_i h^{(i)} \right) \right. \;|\; \eta_j=k\right] \notag\\
          &=& \tfrac{1}{\omega_j}  \sum_{i \in C_j}
          \hat{f}_j \left(  k U_i h^{(i)} \right) \notag \\
          &\overset{\eqref{eq:shs6shs}+\eqref{eq:js0s6sh} }{\leq}& \tfrac{1}{\omega_j} \sum_{i \in C_j}  \tfrac{L_{ji}}{2}\|kh^{(i)}\|_{(i)}^2 \;\;= \;\;  \tfrac{k^2}{2\omega_j} \|h\|_{L_{j:}}^2\label{eq:8488dd8}
\end{eqnarray}
where the second equality follows from Equation (41) in \cite{RT:PCDM}. Finally, 
\begin{equation}
 \Exp \left[\hat{f}_j(h_{[\Srv]})\right]
\overset{\eqref{eq:8488dd8} + \eqref{eq:8893298d9}}{\leq} \sum_{k} \Prob(\eta_j = k) \tfrac{k^2}{2\omega_j} \|h\|_{L_{j:}}^2\\
= \tfrac{1}{2\omega_j} \|h\|_{L_{j:}}^2 \Exp [|C_j\cap \Srv|^2]\\
= \tfrac{\tau\beta_j}{2 n}
\|h\|_{L_{j:}}^2,
\end{equation}
where the last identity is Equation (40) in \cite{RT:PCDM}, and hence \eqref{eq:aksoio4209809878} is established.
\end{proof}

\subsection{Computation of  $L_{ji}$}

We now give  a formula for the constants $L_{ji}$ in the case when $f_j$ arises as a composition of a scalar function $\phi_j$ whose derivative has a known Lipschitz constant (this is often easy to compute), and a linear functional. 
Let $A$ be an $m\times N$ real matrix and for $j\in\{1,2,\dots,m\}$ and $i \in [n]$ define
\begin{equation} A_{ji} \eqdef e_j^T A U_i \in \R^{1 \times N_i}.\label{eq:shs7hd7d}
\end{equation}
That is, $A_{ji}$ is a row vector composed of the elements of row $j$ of $A$  corresponding to block $i$.
 
\begin{thm} \label{thm:compute_Lip}Let $f_j(x) = \phi_j(e_j^T Ax)$, where $\phi_j : \R \to \R$ is a function with  $L_{\phi_j}$-Lipschitz derivative:
\begin{equation}\label{eq:jd8djd8}|\phi_j(s)-\phi_j(s')| \leq L_{\phi_j} |s-s'|, \qquad s,s' \in \R.\end{equation}
Then $f_j$  has a block Lipshitz gradient with constants 
\begin{equation}\label{eq:jssgs5}L_{ji} = L_{\phi_j} \left(\|A_{ji}^T\|_{(i)}^*\right)^2, \qquad i=1,2,\dots,n.\end{equation}
In other words,  $f_j$ satisfies \eqref{eq:newass} with constants $L_{ji}$ given above.
\end{thm}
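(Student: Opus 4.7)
The plan is a direct chain-rule computation, bounding everything via the Cauchy--Schwarz inequality in the block norm together with the Lipschitz hypothesis on $\phi_j'$. The key observation is that $f_j$ depends on $x$ only through the scalar $e_j^T A x$, so its block-$i$ partial gradient factors as a scalar times a fixed vector independent of $x$.

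First I would compute $\nabla_i f_j(x)$ explicitly. By the chain rule, $\nabla f_j(x) = \phi_j'(e_j^T A x) \cdot A^T e_j$, and projecting onto the $i$-th block gives
\[
\nabla_i f_j(x) = U_i^T \nabla f_j(x) = \phi_j'(e_j^T A x) \cdot U_i^T A^T e_j = \phi_j'(e_j^T A x) \cdot A_{ji}^T,
\]
using the definition \eqref{eq:shs7hd7d} of $A_{ji}$. Next, I would substitute $x + U_i t$ in place of $x$ and use the identity $e_j^T A U_i t = A_{ji} t$ to get
\[
\nabla_i f_j(x + U_i t) - \nabla_i f_j(x) = \bigl[\phi_j'(e_j^T A x + A_{ji} t) - \phi_j'(e_j^T A x)\bigr] \cdot A_{ji}^T.
\]

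Then I would take the conjugate block norm $\|\cdot\|_{(i)}^*$ of both sides. Since the right-hand side is a scalar times the fixed vector $A_{ji}^T$, the norm pulls the scalar out, and the Lipschitz assumption \eqref{eq:jd8djd8} on $\phi_j'$ bounds that scalar by $L_{\phi_j} |A_{ji} t|$. Writing $A_{ji} t = \langle A_{ji}^T, t\rangle$ and applying the Cauchy--Schwarz inequality for the pair $(\|\cdot\|_{(i)}, \|\cdot\|_{(i)}^*)$ yields $|A_{ji} t| \leq \|A_{ji}^T\|_{(i)}^* \cdot \|t\|_{(i)}$. Combining these two factors of $\|A_{ji}^T\|_{(i)}^*$ produces exactly the constant $L_{ji}$ claimed in \eqref{eq:jssgs5}, matching Assumption~\ref{ASS:4}.

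There is essentially no obstacle here; the proof is a one-line chain-rule calculation followed by one application of Cauchy--Schwarz. The only subtle point worth being careful about is keeping track of the duality: the partial gradient lives naturally in the dual of $(\R^{N_i}, \|\cdot\|_{(i)})$, so the correct norm to apply to $A_{ji}^T$ (viewed as a column vector in $\R^{N_i}$) is $\|\cdot\|_{(i)}^*$, and both the norm on the gradient difference and the dual-norm bound on $\langle A_{ji}^T, t\rangle$ invoke the same quantity, which is why $(\|A_{ji}^T\|_{(i)}^*)^2$ appears rather than a mixed product.
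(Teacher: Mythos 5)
Your proposal is correct and follows essentially the same argument as the paper: a chain-rule computation giving $\nabla_i f_j(x)=\phi_j'(e_j^TAx)A_{ji}^T$, factoring the scalar out of the conjugate block norm, applying the Lipschitz property of $\phi_j'$, and finishing with the Cauchy--Schwarz bound $|A_{ji}t|\leq\|A_{ji}^T\|_{(i)}^*\|t\|_{(i)}$. Nothing to add.
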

\begin{proof}
For any $x \in \R^N$, $t \in \R^{N_i}$ and $i$ we have
\begin{eqnarray*}
\|\nabla_i f_j(x+ U_i t) - \nabla_i f_j(x)\|_{(i)}^* &\overset{\eqref{eq:syshsj9s}}{ =} & \| U_i^T (e_j^TA)^T \phi_j'(e_j^T A(x+U_i t)) - U_i^T (e_j^TA)^T \phi_j'(e_j^T Ax)\|_{(i)}^*\\
&\overset{\eqref{eq:shs7hd7d}}{=}& \|A_{ji}^T \phi_j'(e_j^T A(x+U_i t)) -  A_{ji}^T\phi_j'(e_j^T Ax)\|_{(i)}^*\\
&\leq& \|A_{ji}^T\|_{(i)}^* |\phi_j'(e_j^T A(x+U_i t)) -  \phi_j'(e_j^T Ax)|\\
&\overset{\eqref{eq:jd8djd8}+\eqref{eq:shs7hd7d}}{\leq}& \|A_{ji}^T\|_{(i)}^* L_{\phi_j} |A_{ji} t|\;\;\leq\;\; \|A_{ji}^T\|_{(i)}^* L_{\phi_j} \|A_{ji}^T\|_{(i)}^* \|t\|_{(i)},
\end{eqnarray*}
where the last step follows by applying the  Cauchy-Schwartz inequality.
\end{proof}

\begin{example}[Quadratics] Consider the quadratic function
\[f(x) = \tfrac{1}{2}\|Ax-b\|^2 = \tfrac{1}{2}\sum_{j=1}^m (e_j^T Ax - b_j)^2.\] 
Then $f_j(x)=\phi_j(e_j^T Ax)$, where $\phi_j(s) = \tfrac{1}{2}(s-b_j)^2$ and $L_{\phi_j}=1$. 

\begin{enumerate}
\item[(i)] Consider the block setup with $N_i=1$ (all blocks are of size 1) and $B_i=1$ for all $i \in [n]$.  Then $L_{ji}=A_{ji}^2$. In Table~\ref{tbl:more_stepsizes} we list stepsizes for coordinate descent methods proposed in the literature. It can be seen that our stepsizes are better than those proposed by Richt\'{a}rik \& Tak\'{a}\v{c} \cite{RT:PCDM} and those proposed by Necoara \& Clipici \cite{NC-dist}. Indeed, $\vv^{\text{rt}}_i\geq \vv^{\text{fr}}_i$ for all $i$. The difference grows as $\tau$ grows; and there is equality for $\tau=1$. We also have $\|\vv^{\text{nc}}\|_1\geq \|\vv^{\text{fr}}\|_1$, but here the difference decreases with $\tau$; and there is equality for $\tau=n$.

\item[(ii)] Choose nontrivial block sizes
 and define data-driven block norms with $B_i=A_i^T A_i$, where $A_i = A U_i$, assuming that the matrices $A_i^T A_i$ are positive definite. Then 
 \[L_{ji} = L_{\phi_j}(\|A_{ji}^T\|_{(i)}^*)^2 \overset{\eqref{eq:norm_block_conj}}{=} \ve{(A_i^T A_i)^{-1} A_{ji}^T}{A_{ji}^T} \overset{\eqref{eq:shs7hd7d}}{=} e_j^T A_i (A_{i}^T A_i)^{-1} A_i^T e_j.\]
\end{enumerate}
\end{example}

Table~\ref{tbl:lip} lists  constants $L_\phi$ for selected scalar loss functions $\phi$ popular in machine learning.

\begin{table}
\begin{center}
\begin{tabular}{|l|l|c|}
\hline
Loss & $\phi(s)$ & $L_\phi$\\
\hline
\phantom{-} &  &\\
Square Loss & $\tfrac{1}{2}s^2$ & 1\\
\phantom{-} & &\\
Logistic Loss & $\log(1+e^s)$ & 1/4\\
\phantom{-} & & \\
\hline
\end{tabular}
\end{center}

\caption{\footnotesize Lipschitz constants of the derivative of selected scalar loss  functions.}
\label{tbl:lip}
\end{table}


\begin{table}
\begin{center}
\begin{tabular}{|l|c|}
\hline
Paper & $\vv_i$\\
\hline
\phantom{-} & \\
Richt\'{a}rik \& Tak\'{a}\v{c} \cite{RT:PCDM} & $\vv^{\text{rt}}_i=\sum_{j=1}^m \left(1+\tfrac{(\omega-1)(\tau-1)}{\max\{1,n-1\}}\right) A_{ji}^2$ \\
\phantom{-} & \\
Necoara \& Clipici \cite{NC-dist}  & $\vv^{\text{nc}}_i = \sum_{j: i \in C_j} \sum_{k=1}^n A_{jk}^2$\\
\phantom{-} & \\
This paper & $\vv^{\text{fr}}_i = \sum_{j=1}^m \left(1 + \frac{(\omega_j-1)(\tau-1)}{\max\{1,n-1\}}\right) A_{ji}^2$ \\
\phantom{-} & \\
\hline
\end{tabular}
\end{center}

\caption{\footnotesize ESO stepsizes for coordinate descent methods suggested in the literature in the case of a quadratic $f(x)=\tfrac{1}{2}\|Ax-b\|^2$. We consider setup with elementary block sizes ($N_i=1$) and $B_i=1$.}
\label{tbl:more_stepsizes}
\end{table}

\section{Accelerated parallel coordinate descent} \label{SEC:ACD}

We are interested in solving the regularized optimization problem
\begin{equation}\label{eq:MAIN1}
\begin{aligned}
\text{minimize} \quad & F(x) \eqdef f(x) + \Reg(x) ,\\
\text{subject to} \quad & x=(x^{(1)},\dots,x^{(n)})\in \R^{N_1}\times \cdots \times \R^{N_n} = \R^N,
\end{aligned}
\end{equation}
where  $\Reg: \R^N \to \R\cup \{+\infty\}$ is a (possibly nonsmooth) convex regularizer that is separable in the blocks $x^{(i)}$:
\begin{equation}\label{eq:MAIN5}\Reg(x) = \sum_{i=1}^n \Reg_i(x^{(i)}).\end{equation}

\subsection{The algorithm}

We now describe our method (Algorithm~\ref{algo:initial}). It is presented here in a form that facilitates analysis and comparison with existing methods. In Section~\ref{SEC:EFFICIENT_IMPLEMENTATION} we rewrite the method into a different (equivalent) form -- one that is geared towards practical efficiency.

\begin{algorithm}
\begin{algorithmic}[1]
\STATE Choose $x_0 \in \R^N$ and set $z_0=x_0$ and $\theta_0=\frac{\tau}{n}$
\FOR{ $k \geq 0$}
\STATE $y_k= (1-\theta_k)x_k+ \theta_k z_k$
\STATE Generate a random set of blocks $S_k\sim \hat{S}$
\STATE $z_{k+1} = z_k$
\FOR{ $i \in S_k$}
\STATE $z_{k+1}^{(i)} = \arg\min_{z \in \R^{N_i}} \left\{ \langle \nabla_i f (y_k), z - y_k^{(i)} \rangle + \frac{n\theta_k  \vv_i}{2\tau} \|z-z_k^{(i)}\|_{(i)}^2 + \Reg_i(z)\right\}$
\ENDFOR 
\STATE $x_{k+1} =y_k +\frac{n}{\tau}\theta_k(z_{k+1}-z_k)$
\STATE $\theta_{k+1}=\frac{\sqrt{\theta_k^4+4 \theta_k^2} - \theta_k^2}{2}$
\ENDFOR 
\end{algorithmic}
\caption{APPROX: {\bf A}ccelerated {\bf P}arallel {\bf Prox}imal Coordinate Descent Method}
\label{algo:initial}
\end{algorithm}

\bigskip

The method starts from $x_0\in \R^N$ and generates three vector sequences, $\{x_k,y_k,z_k\}_{k\geq 0}$. 
In Step~3, $y_k$ is defined as a convex combination of $x_k$ and $z_k$, which may in general be full dimensional vectors. This is not efficient; but we will ignore this issue for now. In Section~\ref{SEC:EFFICIENT_IMPLEMENTATION} we show that it is possible to implement the method in such a way that it not necessary to ever form $y_k$. In Step 4 we generate a random block sampling $S_k$ and then perform steps 5--9 in parallel. The assignment $z_{k+1}\leftarrow z_k$ is not necessary in practice; the vector $z_k$ should be overwritten in place. Instead, Steps 5--8 should be seen as saying that we update blocks $i \in S_k$ of $z_k$, by solving $|S_k|$ proximal problems in parallel,  and call the resulting vector $z_{k+1}$.
Note in Step 9,  $x_{k+1}$ should also be computed in parallel. Indeed,  $x_{k+1}$ is obtained from $y_k$ by changing the blocks of $y_k$ that belong to $S_k$ - this is because $z_{k+1}$ and $z_k$ differ in those blocks only. Note that gradients are evaluated only at $y_k$. We  show in Section~\ref{SEC:EFFICIENT_IMPLEMENTATION} how this can be done efficiently, for some problems, without the need to form $y_k$.

We now formulate the main  result of this paper.

\begin{thm}\label{thm:main}
Let Assumption~\ref{ass:ESO} hold, with $(f, \hat{S}) \sim \mathrm{ESO}(\vv)$, where $\tau=\Exp[|\hat{S}|]>0$. Let $x_0 \in \dom \Reg$, and assume that the random sets $S_k$ in Algorithm~\ref{algo:initial} are chosen independently, following the distribution of $\hat{S}$. Then for any optimal point $x_*$ of problem \eqref{eq:MAIN1}, the iterates $\{x_k\}_{k\geq 1}$ of Algorithm~\ref{algo:initial} satisfy
\begin{eqnarray}\label{eq:main_thm077}
\Exp[F(x_k)-F(x_*)] &\leq& \frac{4 n^2}{((k-1) \tau+2n)^2} C,
\end{eqnarray}
where
\begin{eqnarray}\label{eq:sjs8hd876d}
C &\eqdef& \left(1-\frac{\tau}{n}\right)(F(x_0)-F(x_*))+ \frac{ 1}{2}  \norm{x_0-x_*}_{\vv}^2. 
\end{eqnarray}
In other words, for any $0<\epsilon\leq C$, the number of iterations for obtaining an $\epsilon$-solution in expectation does not exceed
\begin{eqnarray}\label{eq:k_0000}
k &=& \left\lceil \frac{2 n }{\tau } \left(\sqrt{\frac{C}{\epsilon}} - 1\right)+1\right\rceil.
\end{eqnarray}
\end{thm}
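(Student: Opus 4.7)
The plan is a Nesterov-style Lyapunov argument: I would combine the ESO bound on $f(x_{k+1})$ with the strong convexity of the Step~7 prox subproblem to derive a one-step recursion, then telescope it using the defining relation for $\theta_k$.

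\emph{Setup and one-step inequality.} I first introduce the deterministic ``virtual'' full-block update $\tilde z_{k+1}$ obtained by applying the Step~7 formula to every $i\in[n]$; by block separability of the subproblem, $z_{k+1}^{(i)}=\tilde z_{k+1}^{(i)}$ for $i\in S_k$ and $z_{k+1}^{(i)}=z_k^{(i)}$ otherwise, so
\[ x_{k+1} = y_k + h_{[S_k]}, \qquad h \eqdef \tfrac{n}{\tau}\theta_k(\tilde z_{k+1}-z_k), \]
with $h$ deterministic given the past. The ESO (Assumption~\ref{ass:ESO}) applied at $y_k$ with this $h$ yields $\Exp[f(x_{k+1})\mid\mathcal F_k]\leq f(y_k)+\theta_k\ve{\nabla f(y_k)}{\tilde z_{k+1}-z_k}+\tfrac{n\theta_k^2}{2\tau}\norm{\tilde z_{k+1}-z_k}_\vv^2$. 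The Step~7 objective is $\tfrac{n\theta_k}{\tau}$-strongly convex in $\norm{\cdot}_\vv$, so its three-point optimality inequality at the minimizer $\tilde z_{k+1}$ with test point $x_*$, multiplied by $\theta_k$, upper-bounds the right side by $f(y_k)+\theta_k\ve{\nabla f(y_k)}{x_*-z_k}+\tfrac{n\theta_k^2}{2\tau}(\norm{x_*-z_k}_\vv^2-\norm{x_*-\tilde z_{k+1}}_\vv^2)+\theta_k(\psi(x_*)-\psi(\tilde z_{k+1}))$. Decomposing $\theta_k(x_*-z_k)=\theta_k x_* + (1-\theta_k)x_k - y_k$ via $y_k=(1-\theta_k)x_k+\theta_k z_k$ and then applying convexity of $f$ to the resulting inner products absorbs $f(y_k)$ and produces $(1-\theta_k)f(x_k)+\theta_k f(x_*)$.

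\emph{Handling $\psi(x_{k+1})$ --- the main obstacle.} To promote this into a bound on $\Exp[F(x_{k+1})\mid\mathcal F_k]$ I need an inequality of the form
\[ \Exp[\psi(x_{k+1})\mid\mathcal F_k] \leq (1-\theta_k)\psi(x_k) + \theta_k\psi(\tilde z_{k+1}). \]
Separability of $\psi$ and $\Prob(i\in\hat S)=\tau/n$ reduce the left side to $\tfrac{\tau}{n}\psi(y_k+h)+(1-\tfrac{\tau}{n})\psi(y_k)$, but the argument $y_k+h=(1-\theta_k)x_k+(\theta_k-\tfrac{n}{\tau}\theta_k)z_k+\tfrac{n}{\tau}\theta_k\tilde z_{k+1}$ is \emph{not} a convex combination whenever $\tau<n$ (the weight on $z_k$ is negative), so blockwise Jensen is unavailable. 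This is the key technical difficulty; I expect the resolution to require an inductive invariant that represents $\psi(x_k)$ (or an upper bound on it) as a convex combination of $\psi(x_0)$ and past virtual values $\{\psi(\tilde z_j)\}_{j\leq k}$, so that the deficit created by the negative weight cancels against contributions inherited from earlier iterations.

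\emph{Telescoping and the $\theta_k$ rate.} Granting the $\psi$ inequality, subtracting $F(x_*)$ and combining with the one-step bound gives
\[ \Exp[F(x_{k+1})-F(x_*)\mid\mathcal F_k] \leq (1-\theta_k)(F(x_k)-F(x_*)) + \tfrac{n\theta_k^2}{2\tau}\bigl(\norm{x_*-z_k}_\vv^2 - \norm{x_*-\tilde z_{k+1}}_\vv^2\bigr). \]
The blockwise construction of $z_{k+1}$ implies the distributional identity $\Exp[\norm{x_*-z_{k+1}}_\vv^2\mid\mathcal F_k]=\tfrac{\tau}{n}\norm{x_*-\tilde z_{k+1}}_\vv^2+(1-\tfrac{\tau}{n})\norm{x_*-z_k}_\vv^2$, which eliminates $\tilde z_{k+1}$. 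Dividing by $\theta_k^2$ and invoking $\tfrac{1-\theta_{k+1}}{\theta_{k+1}^2}=\tfrac{1}{\theta_k^2}$ shows that
\[ a_k \eqdef \tfrac{1-\theta_k}{\theta_k^2}\Exp[F(x_k)-F(x_*)] + \tfrac{n^2}{2\tau^2}\Exp[\norm{x_*-z_k}_\vv^2] \]
is non-increasing. With $\theta_0=\tau/n$ and $z_0=x_0$ I compute $a_0=(n^2/\tau^2)C$, whence $\Exp[F(x_k)-F(x_*)]\leq\theta_{k-1}^2\,(n^2/\tau^2)\,C$. The standard consequence $1/\theta_{k+1}\geq 1/\theta_k+1/2$ of $\theta_{k+1}^2+\theta_{k+1}\theta_k^2=\theta_k^2$, together with $1/\theta_0=n/\tau$, gives $\theta_{k-1}\leq 2\tau/((k-1)\tau+2n)$, which when substituted produces the advertised rate and, by inversion, the iteration bound \eqref{eq:k_0000}.
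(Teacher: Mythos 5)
Your architecture is exactly the paper's (ESO at $y_k$ with $h=\tfrac{n}{\tau}\theta_k(\tilde z_{k+1}-z_k)$, Tseng's three-point inequality for the prox step, the distributional identity eliminating $\tilde z_{k+1}$ from the distance term, and the telescoping via $\tfrac{1-\theta_{k+1}}{\theta_{k+1}^2}=\tfrac{1}{\theta_k^2}$ with $\theta_{k-1}\le 2\tau/((k-1)\tau+2n)$), and every step you actually execute is correct, including the evaluation $a_0=(n^2/\tau^2)C$. But the step you flag as ``the main obstacle'' is indeed the crux of the theorem and you leave it unproved: the inequality $\Exp[\psi(x_{k+1})\mid\mathcal F_k]\le(1-\theta_k)\psi(x_k)+\theta_k\psi(\tilde z_{k+1})$ is not only unavailable via blockwise Jensen, it is not provable in the form you wrote it, because the quantity you can control by convexity is an \emph{upper bound} on $\psi(x_k)$, and an upper bound cannot be substituted into the right-hand side of the recursion. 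So ``granting the $\psi$ inequality'' grants something false as stated, and the subsequent clean recursion for $F(x_k)$ itself does not hold.

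Your proposed resolution, however, is the right one, and the paper implements it as follows. One proves (Lemma~\ref{lem:xconvcombz}) that $x_k=\sum_{l=0}^k\gamma_k^l z_l$ is a convex combination of the \emph{actual} iterates $z_0,\dots,z_k$ (not the virtual ones), with explicit coefficients satisfying $\gamma_{k+1}^l=(1-\theta_k)\gamma_k^l$ for $l<k$, $\gamma_{k+1}^{k+1}=\tfrac{n}{\tau}\theta_k$, and $\gamma_{k+1}^k=(1-\theta_k)\gamma_k^k+\theta_k-\tfrac{n}{\tau}\theta_k$; nonnegativity of the middle coefficient needs $\theta_k\le\tau/n$ and monotonicity of $\theta_k$. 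One then runs the whole recursion on the surrogate $\hat\psi_k\eqdef\sum_l\gamma_k^l\psi(z_l)\ge\psi(x_k)$ and $\hat F_k\eqdef\hat\psi_k+f(x_k)$ rather than on $F(x_k)$: using $\Exp_k[\psi(z_{k+1})]=(1-\tfrac{\tau}{n})\psi(z_k)+\tfrac{\tau}{n}\psi(\tilde z_{k+1})$ one gets $\Exp_k[\hat\psi_{k+1}]=\sum_{l\le k}\gamma_{k+1}^l\psi(z_l)+(\tfrac{n}{\tau}-1)\theta_k\psi(z_k)+\theta_k\psi(\tilde z_{k+1})$, and the identity $\gamma_{k+1}^k+\tfrac{n-\tau}{\tau}\theta_k=(1-\theta_k)\gamma_k^k$ is exactly the cancellation of the ``deficit'' you anticipated, yielding $\Exp_k[\hat\psi_{k+1}]\le(1-\theta_k)\hat\psi_k+\theta_k\psi(\tilde z_{k+1})$. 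The Lyapunov function is then $a_k=\tfrac{1-\theta_k}{\theta_k^2}\Exp[\hat F_k-F(x_*)]+\tfrac{n^2}{2\tau^2}\Exp[\norm{x_*-z_k}_\vv^2]$, and since $\hat F_0=F(x_0)$ and $F(x_k)\le\hat F_k$, your final computation goes through verbatim. Until that lemma is stated and proved, the argument has a genuine hole at its most novel point.
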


The proof of Theorem~\ref{thm:main} can be found in Section~\ref{SEC:COMPLEXITY}. We now comment on the result:

\begin{enumerate}
\item Note that we do not assume that $f$ be of the form \eqref{eq:main-intro}; all that is needed is Assumption~\ref{ass:ESO}.
\item If $n=1$, we recover Tseng's proximal gradient descent algorithm~\cite{tseng2008accelerated}. If $n>1$, $\tau=1$ and $\Reg\equiv 0$, we obtain a new version of (serial) accelerated coordinate descent~\cite{Nesterov:2010RCDM, lee2013efficient} for minimizing smooth functions. Note that no existing accelerated coordinate descent methods are either proximal, \emph{or} parallel. Our method is both proximal and parallel.


\item In the case when we update all blocks in one iteration ($\tau=n$), the bound \eqref{eq:main_thm077} simplifies to
\begin{equation}\label{eq:sks9s6s5}F(x_k)-F(x_*) \leq \frac{2  \frac{\|\vv\|_1}{n}}{(k+1)^2}\|x_0-x_*\|_{\tilde{\vv}}^2,\end{equation}
where as before, $\tilde{\vv}=n \vv/\|\vv\|_1$.
There is no expectation here as the method is deterministic in this case.

If we use stepsize $\vv$ proposed in Theorem~\ref{thm:newESO}, then in view of part (ii) of that theorem, bound \eqref{eq:sks9s6s5} takes the form
\begin{equation}\label{eq:sks9s6s50909}F(x_k)-F(x_*) \leq \frac{2 \bar{\omega}\bar{L}}{(k+1)^2}\|x_0-x_*\|_{w}^2,\end{equation}
as advertised in the abstract. Recall that $\bar{\omega}$ is a data-weighted \emph{average} of the values $\{\omega_j\}$.

In contrast, using the stepsizes proposed by Richt\'{a}rik \& Tak\'{a}\v{c} \cite{RT:PCDM} (see Table~\ref{tbl:more_stepsizes}), we get

\begin{equation}\label{eq:sks9s6s59985}F(x_k)-F(x_*) \leq \frac{2 \omega \frac{\sum_i L_{i}}{n}}{(k+1)^2}\|x_0-x_*\|_{\tilde{\vv}}^2.\end{equation}


Note that in the case when the functions $f_j$ are convex quadratics ($f_j(x)=\tfrac{1}{2}(a_j^T x-b_j)^2$), for instance, we have $L_i = \sum_{j} L_{ji}$, and hence the new ESO leads to a vast improvement in the complexity in cases when $\bar{\omega}\ll \omega$. On he other hand, in cases where $L_i \ll \sum_{j} L_{ji}$ (which can happen with logistic regression, for instance), the result based on the Richt\'{a}rik-Tak\'{a}\v{c} stepsizes \cite{RT:PCDM} may be better.

\item  Consider the smooth case ($\Reg\equiv 0$): $F=f$ and $f'(x_*)=0$.  By part (ii) of Theorem~\ref{thm:newESO}, $\nabla f$ is Lipschitz with  constant $1$ wrt $\|\cdot\|_{w}$.  Choosing $x=x_*$ and $h = x_0-x_*$, we get
\begin{equation}\label{eq:sjsushd0000}f(x_0)-f(x_*)\leq \frac{1}{2}\|x_0-x_*\|_{w}^2.\end{equation}

Now, consider running Algorithm~\ref{algo:initial} with a $\tau$-nice sampling and stepsize parameter $\vv$ as in Theorem~\ref{thm:newESO}. Letting $d=(d_1,\dots,d_n)$, where $d_i$ is defined by
\begin{equation}\label{eq:shsrps0} \left(1-\frac{\tau}{n}\right)w_i + \vv_i = \left(1-\frac{\tau}{n}\right)\sum_j \omega_j L_{ji} + \sum_{j} \beta_j L_{ji} \leq \sum_{j} (\omega_j+1)L_{ji} \eqdef d_i,\end{equation}
we get
\begin{eqnarray*}
\Exp[f(x_k)-f(x_*)] &\overset{\eqref{eq:main_thm077}+\eqref{eq:sjsushd0000}}{ \leq} & \frac{2n^2}{((k-1)\tau +2n)^2}\|x_0-x_*\|_{\left(1-\frac{\tau}{n}\right)w + \vv}^2 \\
&\overset{\eqref{eq:shsrps0}}{\leq}& \frac{2n^2}{((k-1)\tau +2n)^2}\|x_0-x_*\|^2_d \\ &
\overset{\eqref{eq:sjhs6453}+\eqref{eq:shsths}}{\leq}& \frac{2n^2 (\bar{\omega}+1)\bar{L}}{((k-1)\tau +2n)^2} \|x_0-x_*\|_{\tilde{d}}^2,\end{eqnarray*}
where   in the last step we have used the estimate $\omega_j +\beta_j - \tfrac{\tau \omega_j}{n} \in [\omega_j,\omega_j+1]$, and $\tilde{d}$ is a scalar multiple of $d$ for which $\|\tilde{d}\|_1=1$. Similarly as in \eqref{eq:k_0000}, this means that
\[k  \geq k(\tau) \eqdef 1+\frac{n}{\tau}\sqrt{\frac{2(\bar{\omega}+1)\bar{L}}{ \epsilon }} \|x_0-x_*\|_{\tilde{d}}\]
iterations suffice to produce an $\epsilon$-solution in expectation. Hence, we get \emph{linear speedup} in the number of parallel updates / processors. 
This is \emph{different} from the situation in simple (non-accelerated) parallel coordinate descent methods where parallelization speedup depends on the degree of separability (speedup is better if $\omega$ is small). In APPROX, the average degree of separability $\bar{\omega}$ is decoupled from $\tau$, and hence one benefits from separability even for large $\tau$. This means that \emph{accelerated methods are  more suitable for parallelization}.

\item
We focused on the case of \emph{uniform} samplings, but with a proper change in the definition of  ESO, one can also handle 
\emph{non-uniform} samplings~\cite{RT:2013optimal}.
\end{enumerate}


\section{Complexity analysis}\label{SEC:COMPLEXITY}

We first establish four lemmas and then prove Theorem~\ref{thm:main}.

\subsection{Lemmas}

In the first lemma we summarize well-known properties of the sequence $\theta_k$ used in Algorithm~\ref{algo:initial}.

\begin{lem}[Tseng \cite{tseng2008accelerated}]
\label{lem:theta}
The sequence $\{\theta_k\}_{k\geq 0}$ defined in Algorithm~\ref{algo:initial} is decreasing and satisfies $0 < \theta_k \leq \frac{2}{k+2n / \tau} \leq \tfrac{\tau}{n}\leq 1$ and \begin{equation}\label{eq:theta_id}\frac{1-\theta_{k+1}}{\theta_{k+1}^2} = \frac{1}{\theta_k^2}.
\end{equation} 
\end{lem}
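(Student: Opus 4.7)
My plan is to verify the identity first by solving the quadratic implicit in the update rule, then to use it as the workhorse for the other three claims (positivity, monotonicity, and the $2/(k+2n/\tau)$ envelope), all by induction on $k$.

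For the identity, observe that $\theta_{k+1}$ as defined in Algorithm~\ref{algo:initial} is exactly the positive root of $t^2 + \theta_k^2 t - \theta_k^2 = 0$. Rearranging this quadratic gives $\theta_{k+1}^2 = \theta_k^2 (1-\theta_{k+1})$, which upon dividing by $\theta_{k+1}^2 \theta_k^2$ yields $(1-\theta_{k+1})/\theta_{k+1}^2 = 1/\theta_k^2$. This also shows en passant that $1-\theta_{k+1}\geq 0$, so once we know $\theta_k>0$ we automatically get $0<\theta_{k+1}\leq 1$. Since $\theta_0=\tau/n\in(0,1]$, induction delivers $0<\theta_k\leq 1$ for all $k$. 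Monotonicity is immediate from the same relation: $\theta_{k+1}^2/\theta_k^2 = 1-\theta_{k+1} \leq 1$, so $\theta_{k+1}\leq \theta_k$.

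The non-trivial piece is the bound $\theta_k \leq 2/(k+2n/\tau)$. The clean way to handle it is to pass to reciprocals: set $a_k \eqdef 1/\theta_k$, so the identity becomes $a_{k+1}^2 - a_{k+1} = a_k^2$, i.e.\ $a_{k+1} = \tfrac{1}{2}(1+\sqrt{1+4a_k^2})$. Using $\sqrt{1+4a_k^2}\geq 2a_k$ gives $a_{k+1}\geq a_k + \tfrac{1}{2}$. Telescoping from $a_0 = n/\tau$ yields $a_k \geq n/\tau + k/2 = (k+2n/\tau)/2$, which is precisely $\theta_k \leq 2/(k+2n/\tau)$. The remaining chain $2/(k+2n/\tau)\leq \tau/n\leq 1$ is just plugging in $k\geq 0$ and $\tau\leq n$.

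I do not anticipate any real obstacle here; the only thing to be careful about is using the recursion in the form $\theta_{k+1}^2 = \theta_k^2(1-\theta_{k+1})$ (which is what the identity encodes) rather than trying to manipulate the explicit square-root formula directly, since the latter is awkward. The reciprocal substitution $a_k = 1/\theta_k$ is what makes the envelope proof a one-liner, and all four claims of the lemma reduce to a single induction once the identity is in hand.
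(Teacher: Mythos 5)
Your proof is correct. The paper itself gives no argument for this lemma---it is simply cited from Tseng---so there is nothing to compare against; your derivation (reading off $\theta_{k+1}^2=\theta_k^2(1-\theta_{k+1})$ from the quadratic, then passing to $a_k=1/\theta_k$ and using $a_{k+1}\geq a_k+\tfrac12$ to get the envelope) is the standard and complete way to establish all four claims.
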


We now give an explicit characterization of $x_k$ as a convex combination of the vectors $z_0,\dots,z_k$. 

\begin{lem}
\label{lem:xconvcombz} Let $\{x_k,z_k\}_{k\geq 0}$ be the iterates of Algorithm~\ref{algo:initial}. Then for all $k \geq 0$ we have 
\begin{equation}\label{eq:x_k-convex}
x_{k} = \sum_{l=0}^{k} \gamma_k^l z_l,
\end{equation}
 where the coefficients $\gamma_{k}^0,\gamma_{k}^1,\dots,\gamma_{k}^{k}$ are non-negative and sum to 1. That is,  $x_k$ is a convex combination of the vectors $z_0,z_1,\dots,z_{k}$. In particular,
 the constants  are defined recursively in $k$ by setting $\gamma_{0}^0=1$, $\gamma_{1}^0 = 0$, $\gamma_{1}^1 = 1$ and for $k\geq 1$,
\begin{equation}\label{eq:gammas}\gamma_{k+1}^l = \begin{cases}
(1-\theta_{k})\gamma_{k}^l, & l = 0,\dots,k-1,\\
\theta_{k} (1-\frac{n}{\tau}\theta_{k-1})+\frac{n}{\tau}(\theta_{k-1}-\theta_{k}),  & l=k,\\
\tfrac{n}{\tau}\theta_{k}, & l=k+1. 
\end{cases}\end{equation}
Moreover, for all $k\geq 0$, the following identity holds
\begin{equation}\label{eq:s9djd7}\gamma_{k+1}^k + \frac{n-\tau}{\tau}\theta_k  = (1-\theta_k)\gamma_{k}^k.\end{equation}

\end{lem}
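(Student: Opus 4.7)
The plan is to proceed by induction on $k$, reading the two sequences $\{x_k\}$ and $\{z_k\}$ directly off Steps~3 and~9 of Algorithm~\ref{algo:initial}. The base case $k=0$ is immediate since $x_0=z_0$ by initialization, so $\gamma_0^0=1$ works. For $k=1$, note $\theta_0=\tau/n$, so Step~3 gives $y_0=(1-\theta_0)x_0+\theta_0 z_0 = x_0 = z_0$ and Step~9 then yields
\[
x_1 \;=\; z_0+\tfrac{n}{\tau}\theta_0(z_1-z_0) \;=\; \bigl(1-\tfrac{n}{\tau}\theta_0\bigr)z_0+\tfrac{n}{\tau}\theta_0 z_1 \;=\; z_1,
\]
which matches $\gamma_1^0=0$, $\gamma_1^1=1$.

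For the inductive step, I would substitute the recursion for $y_k$ from Step~3 into Step~9 to obtain the one-line identity
\[
x_{k+1} \;=\; (1-\theta_k)x_k+\theta_k\bigl(1-\tfrac{n}{\tau}\bigr)z_k+\tfrac{n}{\tau}\theta_k z_{k+1}.
\]
Now plug in the induction hypothesis $x_k=\sum_{l=0}^k \gamma_k^l z_l$ and collect coefficients of each $z_l$. Coefficients for $l\le k-1$ become $(1-\theta_k)\gamma_k^l$ (matching the first branch of \eqref{eq:gammas}), and the coefficient of $z_{k+1}$ is $\tfrac{n}{\tau}\theta_k$ (matching the third branch). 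For the coefficient of $z_k$ we get $(1-\theta_k)\gamma_k^k+\theta_k(1-\tfrac{n}{\tau})$. To match the middle branch of \eqref{eq:gammas}, observe that by induction $\gamma_k^k=\tfrac{n}{\tau}\theta_{k-1}$ (true for $k\ge 1$; for $k=1$ one checks $\gamma_1^1=1=\tfrac{n}{\tau}\theta_0$), whence
\[
(1-\theta_k)\tfrac{n}{\tau}\theta_{k-1}+\theta_k-\tfrac{n}{\tau}\theta_k \;=\; \theta_k\bigl(1-\tfrac{n}{\tau}\theta_{k-1}\bigr)+\tfrac{n}{\tau}(\theta_{k-1}-\theta_k),
\]
exactly as required.

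It remains to check nonnegativity and that the coefficients sum to one. Summing the three branches in \eqref{eq:gammas} for $l=0,\ldots,k+1$ and telescoping against the inductive sum $\sum_l \gamma_k^l=1$ shows the total equals $1$. For nonnegativity, $(1-\theta_k)\gamma_k^l\ge 0$ uses $\theta_k\le 1$ from Lemma~\ref{lem:theta}; the $z_{k+1}$-coefficient $\tfrac{n}{\tau}\theta_k$ is clearly positive; and for the $z_k$-coefficient, both summands $\theta_k(1-\tfrac{n}{\tau}\theta_{k-1})$ and $\tfrac{n}{\tau}(\theta_{k-1}-\theta_k)$ are nonnegative, thanks to the bounds $\theta_{k-1}\le \tau/n$ and the monotone decrease of $\theta_k$ from Lemma~\ref{lem:theta}. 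Finally, the identity \eqref{eq:s9djd7} is a direct algebraic manipulation: rewrite the middle branch of \eqref{eq:gammas} as $\theta_k+\tfrac{n}{\tau}(\theta_{k-1}-\theta_k)-\tfrac{n}{\tau}\theta_{k-1}\theta_k = \theta_k-\tfrac{n-\tau}{\tau}\theta_k + (1-\theta_k)\tfrac{n}{\tau}\theta_{k-1} - \theta_k\cdot 0$, then use $\gamma_k^k=\tfrac{n}{\tau}\theta_{k-1}$ to read off $\gamma_{k+1}^k=(1-\theta_k)\gamma_k^k-\tfrac{n-\tau}{\tau}\theta_k$. The only place that is slightly delicate is making the bookkeeping of the $l=k$ coefficient match the stated formula, so I expect to spend most of the writing there; the rest is routine.
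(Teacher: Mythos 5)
Your proof is correct and follows essentially the same route as the paper's: induction on $k$, substituting Step~3 into Step~9 to write $x_{k+1}$ as an affine combination of $x_k$, $z_k$ and $z_{k+1}$, and collecting coefficients (you are in fact slightly more careful than the paper in reconciling the computed $l=k$ coefficient with the closed form via $\gamma_k^k=\tfrac{n}{\tau}\theta_{k-1}$ and in checking its nonnegativity term by term). The only blemish is the final displayed algebra for \eqref{eq:s9djd7}, which as written is off by $\theta_k$ (the spurious ``$-\theta_k\cdot 0$'' bookkeeping); this is harmless, since the identity follows in one line from the coefficient you already computed, namely $\gamma_{k+1}^k=(1-\theta_k)\gamma_k^k+\theta_k\bigl(1-\tfrac{n}{\tau}\bigr)=(1-\theta_k)\gamma_k^k-\tfrac{n-\tau}{\tau}\theta_k$.
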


\begin{proof}
We proceed by induction. First, notice that $x_0=z_0 = \gamma_0^0 z_0$.  This  implies that $y_0=z_0$,  which in turn together with $\theta_0=\tfrac{\tau}{n}$ gives
$x_1=y_0 + \tfrac{n}{\tau}\theta_0(z_1-z_0) = z_1 = \gamma_{1}^0 z_0 + \gamma_{1}^1 z_1$. Assuming now that \eqref{eq:x_k-convex} holds for some $k\geq 1$, we obtain
\begin{eqnarray}
x_{k+1}&\overset{\text{(Alg~\ref{algo:initial}, step 9)}}{=}& y_k+\frac{n}{\tau}\theta_k(z_{k+1}-z_k) \notag\\
&\overset{\text{(Alg~\ref{algo:initial}, step 3)}}{=}&(1-\theta_k) x_k + \theta_k z_k -\frac{n}{\tau}\theta_k z_k + \frac{n}{\tau} \theta_k z_{k+1}\label{eq:affine} \\
&=&   \sum_{l=0}^{k-1} \underbrace{(1-\theta_k) \gamma_{k}^l}_{\gamma_{k+1}^l} z_l  +  \underbrace{\left((1-\theta_k)\gamma_{k}^k  + \theta_k  - \frac{n}{\tau}\theta_k \right)}_{\gamma_{k+1}^k} z_k 
+  \underbrace{\left( \frac{n}{\tau} \theta_k \right)}_{\gamma_{k+1}^{k+1}} z_{k+1}.\notag
\end{eqnarray}
By applying Lemma~\ref{lem:theta}, together with the inductive assumption that $\gamma_{k}^l\geq 0$ for all $l$,  we observe that $\gamma_{k+1}^l\geq 0$ for all $l$. It remains to show that the constants sum to 1. This is true since $x_{k}$ is a convex combination of $z_1,\dots,z_k$, and by \eqref{eq:affine}, $x_{k+1}$ is an affine combination of $x_k$, $z_k$ and $z_{k+1}$. 
\end{proof}

Define
\begin{eqnarray*}
\tilde{z}_{k+1} &\eqdef& \arg \min_{z \in \R^N} \left\{\Reg(z) + \ve{\nabla f(y_k)}{z-y_k} + \frac{n\theta_k}{2\tau}\|z-z_k\|_{\vv}^2\right\}\\
&\overset{\eqref{eq:norm_block}+\eqref{eq:MAIN5}}{=}& \arg \min_{z = (z^{(1)},\dots,z^{(n)}) \in \R^N} \sum_{i=1}^n\left\{ \Reg_i(z^{(i)}) + \ve{\nabla_i f(y_k)}{z^{(i)}-y_k^{(i)}} + \frac{n\theta_k \vv_i }{2\tau}\|z^{(i)}-z_k^{(i)}\|_{(i)}^2\right\}.
\end{eqnarray*}
From this and the definition of $z_{k+1}$ we see that  \begin{equation}\label{eq:z_{k+1}}
z_{k+1}^{(i)}= \begin{cases}
\tilde{z}_{k+1}^{(i)}, & i \in S_k\\
z_{k}^{(i)}, & i \not \in S_k.\\
\end{cases}
\end{equation}

The next lemma is an application to a specific function of a well-known result that can be found, for instance, in \cite{tseng2008accelerated}. The result was used  by Tseng  to construct a simplified complexity proof for a proximal gradient descent method. This lemma requires the norms $\|\cdot\|_{(i)}$ to be Euclidean -- and this is the only place in our analysis where this is required.

\begin{lem}[Property 1 in \cite{tseng2008accelerated}]
 Let $\xi(u) \eqdef f(y_k)+ \ve{\nabla f(y_k)}{u - y_k} + \frac{n\theta_k }{2\tau}  \|u - z_k\|_{\vv}^2$. Then
\begin{equation}\label{eq:9sjs8s}
\Reg(\tilde{z}_{k+1})+ \xi(\tilde{z}_{k+1}) \leq \Reg(x_*)+ \xi(x_*) - \frac{n\theta_k }{2\tau} \norm{x_*-\tilde{z}_{k+1}}_{\vv}^2.
\end{equation}
\end{lem}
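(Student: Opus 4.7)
The plan is to identify $\tilde{z}_{k+1}$ as the unconstrained minimizer over $\R^N$ of the composite function $G(u) \eqdef \Reg(u) + \xi(u)$, and then invoke the standard strong-convexity inequality at the minimizer. From the definition of $\tilde{z}_{k+1}$ displayed immediately before the lemma, together with the block separability of both $\Reg$ and the quadratic penalty, $\tilde{z}_{k+1}$ is precisely $\arg\min_{u \in \R^N} G(u)$.

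The key observation is that $G$ is strongly convex with modulus $\mu \eqdef \frac{n\theta_k}{\tau}$ with respect to $\norm{\cdot}_{\vv}$. Indeed, the affine piece $f(y_k) + \ve{\nabla f(y_k)}{u - y_k}$ of $\xi$ is convex with no curvature, $\Reg$ is convex by assumption on problem \eqref{eq:MAIN1}, and the quadratic $\frac{n\theta_k}{2\tau}\norm{u - z_k}_{\vv}^2$ contributes exactly $\mu$ units of strong convexity because each block norm $\norm{\cdot}_{(i)}$ is Hilbertian (induced by the positive-definite matrix $B_i$). This is precisely the Euclidean assumption flagged in the remark preceding the lemma statement: for a non-Euclidean block norm the half-squared-distance would fail to be $1$-strongly convex with respect to itself, and the argument would break down.

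From strong convexity one uses the first-order optimality condition $0 \in \partial G(\tilde{z}_{k+1})$ together with the subgradient form of the strong-convexity inequality,
\begin{equation*}
G(u) \;\geq\; G(\tilde{z}_{k+1}) + \ve{g}{u - \tilde{z}_{k+1}} + \tfrac{\mu}{2}\norm{u - \tilde{z}_{k+1}}_{\vv}^2, \qquad g \in \partial G(\tilde{z}_{k+1}).
\end{equation*}
Choosing $g = 0$ and $u = x_*$ yields $G(x_*) \geq G(\tilde{z}_{k+1}) + \tfrac{\mu}{2}\norm{x_* - \tilde{z}_{k+1}}_{\vv}^2$, which on expanding $G = \Reg + \xi$ and rearranging is exactly \eqref{eq:9sjs8s}. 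No serious obstacle arises: the lemma is a textbook application of strong convexity to the proximal subproblem defining $\tilde{z}_{k+1}$, with the only delicate input being the Hilbertian structure of the block norms. Since $\Reg$ may be nonsmooth one works with the subdifferential rather than the gradient, but this is routine convex analysis and mirrors Tseng's original argument in \cite{tseng2008accelerated}.
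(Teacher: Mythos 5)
Your proof is correct and is essentially the argument the paper relies on: the paper does not prove this lemma itself but cites Tseng's Property 1, whose proof is exactly your strong-convexity-at-the-minimizer argument ($\tilde{z}_{k+1}$ minimizes $G=\Reg+\xi$, $G$ is $\tfrac{n\theta_k}{\tau}$-strongly convex w.r.t.\ $\norm{\cdot}_{\vv}$, hence $G(x_*)\geq G(\tilde{z}_{k+1})+\tfrac{n\theta_k}{2\tau}\norm{x_*-\tilde{z}_{k+1}}_{\vv}^2$). You also correctly identify the Euclidean structure of the block norms as the one place where the argument genuinely needs it, matching the remark preceding the lemma.
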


Our next lemma is a technical result connecting the gradient mapping (producing $\tilde{z}_{k+1}$) and the stochastic block gradient mapping (producing the random vector $z_{k+1}$). The lemma reduces to a trivial identity in the case when of a single block ($n=1$). From now on, by $\Exp_k$ we denote the expectation with respect to $S_k$, keeping everything else fixed.

\begin{lem} For any  $x \in \R^N$ and $k\geq 0$,
\begin{equation}\label{eq:99sjs7ss}
\Exp_k \left[\|z_{k+1}-x\|_{\vv}^2 - \|z_k-x\|_{\vv}^2\right] = \frac{\tau}{n} \left(\|\tilde{z}_{k+1}-x\|_{\vv}^2 - \|z_k-x\|_{\vv}^2\right).
\end{equation}
Moreover,
\begin{equation}\label{eq:separable_exp}\Exp_k\left[\Reg(z_{k+1})\right] = \left(1-\frac{\tau}{n}\right)\Reg(z_k) + \frac{\tau}{n}\Reg(\tilde{z}_{k+1}).\end{equation}
\end{lem}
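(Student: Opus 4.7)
The plan is to exploit the key structural property that both the weighted norm $\|\cdot\|_{\vv}$ (see \eqref{eq:norm_block}) and the regularizer $\Reg$ (see \eqref{eq:MAIN5}) are block separable, combined with the characterization of $z_{k+1}$ given in \eqref{eq:z_{k+1}}: on blocks $i \in S_k$ we have $z_{k+1}^{(i)} = \tilde{z}_{k+1}^{(i)}$, while on the complementary blocks $z_{k+1}^{(i)} = z_k^{(i)}$. The only probabilistic input we need is the marginal $\Prob(i \in S_k) = \tau/n$, which holds for every $i \in [n]$ by uniformity of $\hat{S}$ (and also follows from the footnote to Assumption~\ref{ass:ESO} which notes this identity). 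No further properties of $\hat{S}$ (in particular, no joint distribution of pairs $(i,j)$) will be required, because both quantities we analyze are \emph{linear} in per-block indicator variables.

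For the first identity, I would decompose the telescoping difference blockwise:
\[
\|z_{k+1}-x\|_{\vv}^2 - \|z_k-x\|_{\vv}^2 = \sum_{i=1}^n \vv_i\bigl(\|z_{k+1}^{(i)}-x^{(i)}\|_{(i)}^2 - \|z_k^{(i)}-x^{(i)}\|_{(i)}^2\bigr).
\]
By \eqref{eq:z_{k+1}}, the $i$-th summand equals $\mathbf{1}_{\{i\in S_k\}}\cdot\vv_i\bigl(\|\tilde z_{k+1}^{(i)}-x^{(i)}\|_{(i)}^2 - \|z_k^{(i)}-x^{(i)}\|_{(i)}^2\bigr)$, since the expression vanishes on $\{i\notin S_k\}$. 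Taking $\Exp_k$ (note $\tilde z_{k+1}$ is $S_k$-measurable through its dependence only on $y_k$ and $z_k$, hence deterministic given the past), using $\Exp_k[\mathbf{1}_{\{i\in S_k\}}]=\tau/n$, and resumming the blockwise contributions back into the $\vv$-norm yields exactly \eqref{eq:99sjs7ss}.

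For the second identity, the argument is even more direct: separability gives
\[
\Reg(z_{k+1}) = \sum_{i=1}^n \Reg_i(z_{k+1}^{(i)}) = \sum_{i=1}^n \Bigl(\mathbf{1}_{\{i\in S_k\}}\Reg_i(\tilde z_{k+1}^{(i)}) + \mathbf{1}_{\{i\notin S_k\}}\Reg_i(z_k^{(i)})\Bigr).
\]
Taking $\Exp_k$ and using $\Exp_k[\mathbf{1}_{\{i\in S_k\}}] = \tau/n$ and $\Exp_k[\mathbf{1}_{\{i\notin S_k\}}] = 1-\tau/n$ for every $i$, followed by recollecting the sums into $\Reg(\tilde z_{k+1})$ and $\Reg(z_k)$, gives \eqref{eq:separable_exp}.

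There is no real obstacle here: both statements are essentially consequences of separability combined with the single fact that each block is selected with probability $\tau/n$. The only thing worth checking carefully is that $\tilde z_{k+1}$ depends on $y_k$ and $z_k$ only (and not on $S_k$), so that it can be treated as a constant under $\Exp_k$; this is immediate from its defining optimization problem.
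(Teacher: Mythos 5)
Your proof is correct and rests on the same underlying fact as the paper's: for a uniform sampling each block is selected with probability $\tau/n$, and both the squared-norm difference and $\Reg$ decompose blockwise into terms that are linear in the indicators $\mathbf{1}_{\{i\in S_k\}}$. The only difference is presentational: the paper expands $\|z_{k+1}-x\|_{\vv}^2-\|z_k-x\|_{\vv}^2$ as $\|h_{[S_k]}\|_{\vv}^2+2\ve{z_k-x}{h_{[S_k]}}_{\vv}$ with $h=\tilde{z}_{k+1}-z_k$ and invokes prepackaged expectation identities from Theorem~4 of \cite{RT:PCDM}, whereas you prove the needed identities inline via the blockwise indicator decomposition.
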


\begin{proof} Let $\hat{S}$ be any uniform sampling and $a,h \in \R^N$. 
 Theorem~4 in \cite{RT:PCDM} implies that
\begin{equation}\label{eq:0978098}\Exp[\|h_{[\hat{S}]}\|_{\vv}^2] = \tfrac{\tau}{n}\|h\|_{\vv}^2, \quad \Exp[\ve{a}{h_{[\hat{S}]}}_{\vv}] = \tfrac{\tau}{n} \ve{a}{h}_{\vv}, \quad \Exp[\Reg(a+h_{[\hat{S}]})] = \left(1-\tfrac{\tau}{n}\right)\Reg(a) + \tfrac{\tau}{n}\Reg(a+h),\end{equation}
where  $\ve{a}{h}_{\vv}\eqdef \sum_{i=1}^n {\vv}_i\ve{a^{(i)}}{h^{(i)}}$. Let $h = \tilde{z}_{k+1} - z_k$. In view of \eqref{eq:h_S} and \eqref{eq:z_{k+1}}, we can write $z_{k+1}-z_k  = h_{[S_k]}$. Applying the first two identities in \eqref{eq:0978098} with $a=z_k-x$ and $\hat{S}=S_k$, we get
\begin{eqnarray*}
\Exp_k \left[\|z_{k+1}-x\|_{\vv}^2 - \|z_k-x\|_{\vv}^2\right] &=& \Exp_k \left[\|h_{[S_k]}\|_{\vv}^2 + 2 \ve{z_k-x}{h_{[S_k]}}_{\vv}\right]\\
&\overset{\eqref{eq:0978098}}{=} & \frac{\tau}{n}\left(\|h\|_{\vv}^2 + 2 \ve{z_k-x}{h}_{\vv}\right)\;\;=\;\;\frac{\tau}{n} \left(\|\tilde{z}_{k+1}-x\|_{\vv}^2 - \|z_k-x\|_{\vv}^2\right).
\end{eqnarray*}
The remaining statement follows from the last identity in \eqref{eq:0978098} used with $a = z_k$.
\end{proof}

\subsection{Proof of Theorem~\ref{thm:main}}

Using Lemma~\ref{lem:xconvcombz} and convexity of $\Reg$, for all $k\geq 0$ we have
\begin{equation}
\label{eq:js8snss0}\Reg(x_{k}) \;\;\overset{\eqref{eq:x_k-convex}}{=}\;\;\Reg\left(\sum_{l=0}^{k} \gamma_{k}^l z_l\right)\;\; \overset{(\text{convexity})}{\leq}\;\; \sum_{l=0}^{k}\gamma_{k}^l \Reg(z_l)\;\; \eqdef \;\; \hat{\Reg}_{k}. \end{equation} 
From this we get
\begin{eqnarray} 
\Exp_k[\hat{\Reg}_{k+1}]
&\overset{\eqref{eq:js8snss0}+\eqref{eq:gammas}}{=}& \sum_{l=0}^{k} \gamma_{k+1}^l \Reg(z_l) +  \frac{n}{\tau} \theta_k \Exp_k \left[\Reg(z_{k+1})\right]\notag\\
&\overset{\eqref{eq:separable_exp}}{=} &
\sum_{l=0}^{k} \gamma_{k+1}^l \Reg(z_l) +  \frac{n}{\tau} \theta_k \left(\left(1-\frac{\tau}{n}\right)\Reg(z_k) + \frac{\tau}{n}\Reg(\tilde{z}_{k+1})\right)\notag\\
&=&
\sum_{l=0}^{k} \gamma_{k+1}^l \Reg(z_l) +   \left(\frac{n}{\tau}-1\right)\theta_k\Reg(z_k) + \theta_k\Reg(\tilde{z}_{k+1}).\label{eq:js8s8ss}
\end{eqnarray}

Since  $x_{k+1}=y_k+h_{[S_k]}$ with $h=\frac{n}{\tau}\theta_k (\tilde{z}_{k+1} - z_k)$, we can use  ESO to bound
\begin{eqnarray}
\Exp_k[f(x_{k+1})]& \overset{\eqref{eq:ESO}}{\leq}& f(y_k) + \theta_k \langle \nabla f(y_k) , \tilde{z}_{k+1} - z_k \rangle + \frac{n\theta_k^2}{2\tau}\norm{\tilde{z}_{k+1} - z_k}_{\vv}^2 \notag\\
& = &(1-\theta_k) f(y_k)  -  \theta_k \langle \nabla f(y_k) , z_k - y_k \rangle \notag\\
&& \;\; + \theta_k \Big( f(y_k) + \langle \nabla f(y_k) , \tilde{z}_{k+1} - y_k \rangle + \frac{n\theta_k}{2\tau}\norm{\tilde{z}_{k+1} - z_k}_{\vv}^2  \Big). \label{eq:sjsuhs6s}
\end{eqnarray}

Note that from the definition of $y_k$ in the algorithm, we have

\begin{equation}\label{eq:shbd4d7d}
\theta_k (y_k - z_k) = ((1-\theta_k)x_k - y_k) + \theta_k y_k = (1-\theta_k)(x_k-y_k).
\end{equation}

For all $k\geq 0$ we define  an upper bound on $F(x_k)$,
 \begin{equation}\label{eq:shd7dhd}\hat{F}_{k} \;\;\eqdef\;\; \hat{\Reg}_{k} + f(x_{k}) \;\; \overset{\eqref{eq:js8snss0}}{\geq} \;\; F(x_{k}),\end{equation} 
and bound the  expectation of $\hat{F}_{k+1}$ in $S_k$ as follows:

\begin{eqnarray}
\Exp_k[\hat{F}_{k+1}]
&=&\Exp_k[\hat{\Reg}_{k+1}] + \Exp_k[f(x_{k+1})]\notag\\
&\overset{\eqref{eq:js8s8ss}+\eqref{eq:sjsuhs6s}}{\leq} &\sum_{l=0}^{k} \gamma_{k+1}^l \Reg(z_l) +  \frac{n-\tau}{\tau} \theta_k \Reg(z_{k})  +(1-\theta_k) f(y_k)  -  \theta_k \langle \nabla f(y_k) , z_k - y_k \rangle \notag\\
& \quad& + \theta_k \Big(\Reg(\tilde{z}_{k+1}) + f(y_k) + \langle \nabla f(y_k) , \tilde{z}_{k+1} - y_k \rangle + \frac{n \theta_k }{2\tau}\norm{\tilde{z}_{k+1} - z_k}_{\vv}^2  \Big) \notag\\
&\overset{\eqref{eq:9sjs8s}}{\leq} &  \sum_{l=0}^{k} \gamma_{k+1}^l \Reg(z_l) +  \frac{n-\tau}{\tau} \theta_k \Reg(z_{k})+(1-\theta_k) f(y_k)  -  \theta_k \langle \nabla f(y_k) , z_k - y_k \rangle \notag\\
& \quad & + \theta_k \Big( \Reg(x_*)+f(y_k)+\langle \nabla f(y_k) , x_* - y_k \rangle + \frac{n\theta_k }{2\tau}  \norm{x_*-z_k}_{\vv}^2 - \frac{n\theta_k }{2\tau} \norm{x_*-\tilde{z}_{k+1}}_{\vv}^2 \Big) \notag\\
&\overset{\eqref{eq:shbd4d7d}}{=}&   \sum_{l=0}^{k-1} \underbrace{\gamma_{k+1}^l}_{\overset{\eqref{eq:gammas}}{=}(1-\theta_k)\gamma_{k}^l} \Reg(z_l) +  \underbrace{\left(\gamma_{k+1}^k + \frac{n-\tau}{\tau}\theta_k\right) }_{ \overset{\eqref{eq:s9djd7}}{=} (1-\theta_k)\gamma_{k}^k}\Reg(z_k) \notag\\
&\quad& + \underbrace{(1-\theta_k) f(y_k)  +(1-\theta_k) \langle \nabla f(y_k) , x_k - y_k \rangle}_{\leq (1-\theta_k)f(x_k)} \notag \\
& \quad& + \theta_k \Big( \underbrace{\Reg(x_*)+f(y_k)+\langle \nabla f(y_k) , x_* - y_k \rangle}_{\leq F(x_*)} + \frac{n\theta_k }{2\tau}  \norm{x_*-z_k}_{\vv}^2
- \frac{n\theta_k }{2\tau} \norm{x_*-\tilde{z}_{k+1}}_{\vv}^2 \Big)\notag \\
&\overset{\eqref{eq:js8snss0}+\eqref{eq:shd7dhd}}{\leq} & (1-\theta_k) \hat{F}_k + \theta_k F(x_*) + \frac{n\theta_k^2 }{2\tau} \left( \norm{x_*-z_k}_{\vv}^2
-\norm{x_*-\tilde{z}_{k+1}}_{\vv}^2 \right) \notag\\
& \overset{\eqref{eq:99sjs7ss}}{=}&  (1-\theta_k) \hat{F}_k + \theta_k F(x_*) + \frac{n^2\theta_k^2 }{2\tau^2} \left( \norm{x_*-z_k}_{\vv}^2
-\Exp_k \left[\norm{x_*-z_{k+1}}_{\vv}^2\right]\right). \label{eq:msg876}
\end{eqnarray}


After dividing both sides of \eqref{eq:msg876} by $\theta_k^2$, using \eqref{eq:theta_id}, and rearranging the terms, we obtain 
\[
\frac{1 -\theta_{k+1}}{\theta_{k+1}^2}\Exp_k[\hat{F}_{k+1}-F(x_*)] + \frac{n^2}{2\tau^2} \Exp_k [\norm{x_*-z_{k+1}}_{\vv}^2] \;\;\leq \;\; \frac{1-\theta_k}{\theta_k^2} (\hat{F}_k-F(x_*)) + \frac{n^2}{2\tau^2} \norm{x_*-z_k}_{\vv}^2 .
\]

We now apply total expectation to the above inequality and unroll the recurrence for $l$ between 0 and $k$, obtaining
\begin{eqnarray}
\frac{1 -\theta_{k}}{\theta_{k}^2}\Exp[\hat{F}_{k}-F(x_*)] + \frac{n^2}{2\tau^2}\Exp[\|x_*-z_{k+1}\|_{\vv}^2] &\leq& \frac{1-\theta_0}{\theta_0^2} (\hat{F}_0-F(x_*))+ \frac{n^2}{2\tau^2} \norm{x_*-z_0}_{\vv}^2, \label{eq:mnb8733}
\end{eqnarray}
from which we finally get for $k \geq 1$,
\begin{eqnarray*}
\Exp[F(x_k)-F(x_*)]&\overset{\eqref{eq:shd7dhd}}{\leq}& \Exp[\hat{F}_{k}-F(x_*)]\\
 &\overset{\eqref{eq:mnb8733}}{\leq} & \frac{\theta_{k-1}^2}{\theta_0^2} (1-\theta_0)(\hat{F}_0-F(x_*))+ \frac{n^2 \theta_{k-1}^2}{2\tau^2}  \norm{x_*-z_0}_{\vv}^2  \\
& \leq& \frac{4 n^2}{((k-1) \tau+2n)^2} \Big( \left(1-\frac{\tau}{n}\right)(F(x_0)-F(x_*))+ \frac{1}{2}  \norm{x_0-x_*}_{\vv}^2 \Big), 
\end{eqnarray*}
where in the last step we have used the facts that $\hat{F}_0= F(x_0)$, $x_0=z_0$, $\theta_0=\frac{\tau}{n}$ and the estimate $\theta_{k-1} \leq \tfrac{2}{k-1+2n/\tau}$ from Lemma~\ref{lem:theta}.

\section{Implementation without full-dimensional vector operations}\label{SEC:EFFICIENT_IMPLEMENTATION}

Algorithm~\ref{algo:initial}, as presented, performs full-dimensional vector operations. Indeed, $y_k$ is defined as a convex combination of $x_k$ and $z_k$. Also, $x_{k+1}$ is obtained from $y_k$ by changing $|S_k|$ coordinates; however, if $|S_k|$ is small, the latter operation is not costly. In any case, vectors $x_k$ and $z_k$ will in general be dense, and  hence computation of $y_k$ may cost $O(N)$ arithmetic operations. However, simple (i.e., non-accelerated) coordinate descent methods are successful and popular precisely because they can avoid such operations.

Borrowing ideas from Lee \& Sidford \cite{lee2013efficient}, we rewrite\footnote{Note that we override the notation $\tilde{z}_k$ here -- it now has a different meaning from that in Section~\ref{SEC:COMPLEXITY}.
}  Algorithm~\ref{algo:initial} into a new form,  incarnated as Algorithm~\ref{algo:w-simple}. Note that the notation $\tilde{z}_k$ used here has a different meaning than in the previous section.

\begin{algorithm}
\begin{algorithmic}[1]
\STATE Pick $\tilde{z}_0\in \R^N$ and set $\theta_0=\frac{\tau}{n}$, $\pp_0=0$
\FOR{ $k \geq 0$}
\STATE Generate a random set of blocks $S_k \sim \hat{S}$
\STATE $\pp_{k+1} \leftarrow \pp_k$, $\tilde{z}_{k+1} \leftarrow \tilde{z}_k$ 
\FOR{ $i \in S_k$}
\STATE $t_{k}^{(i)} = \arg\min_{t \in \R^{N_i}} \left\{ \langle \nabla_i f (\theta_k^2 \pp_k + \tilde{z}_k), t \rangle + \frac{n\theta_k  \vv_i}{2\tau}  \|t\|_{(i)}^2 + \Reg_i(\tilde{z}_{k}^{(i)}+t)\right\}$ 
\STATE $\tilde{z}_{k+1}^{(i)} \leftarrow \tilde{z}_{k}^{(i)} + t_{k}^{(i)}$
\STATE $\pp_{k+1}^{(i)} \leftarrow \pp_k^{(i)} - \frac{1-\frac{n}{\tau} \theta_k}{\theta_k^2}t_{k}^{(i)}$
\ENDFOR 
\STATE $\theta_{k+1}=\frac{\sqrt{\theta_k^4+4 \theta_k^2} - \theta_k^2}{2}$
\ENDFOR 
\STATE OUTPUT: $\theta_{k}^2 \pp_{k+1} + \tilde{z}_{k+1}$
\end{algorithmic}
\caption{APPROX (written in a form  facilitating efficient implementation) }
\label{algo:w-simple}
\end{algorithm}

Note that if instead of updating the constants $\theta_k$ as in line 10 we keep them constant throughout, $\theta_k=\tfrac{\tau}{n}$, then $u_k=0$ for all $k$. The resulting method is precisely the PCDM algorithm (\emph{non-accelerated} parallel block-coordinate descent method) proposed and analyzed in \cite{RT:PCDM}. 

As it is not immediately obvious that the two methods(Algorithms 1 and 2) are equivalent, we include the following result. Its proof can be found in the appendix.

\begin{prop}[Equivalence] \label{prop:equivalence2} 
Run Algorithm~\ref{algo:w-simple} with $\tilde{z}_0=x_0$, where $x_0\in \dom \Reg$ is the starting point of Algorithm~\ref{algo:initial}. If we define
\begin{equation}\label{eq:xxx}
\tilde{x}_{k} = \begin{cases} 
\tilde{z}_0, & \quad k=0,\\
\theta_{k-1}^2 \pp_{k} + \tilde{z}_k, & \quad k\geq 1,
\end{cases}
\end{equation} 
and
\begin{equation}\label{eq:yyyy} \tilde{y}_k = \theta_k^2 \pp_k + \tilde{z}_k, \quad k\geq 0, \end{equation}
then $x_k=\tilde{x}_k$, $y_k=\tilde{y}_k$ and $z_k = \tilde{z}_k$ for all $k\geq 0$. That is, Algorithms~\ref{algo:initial} and \ref{algo:w-simple} are equivalent.
\end{prop}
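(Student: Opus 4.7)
The plan is to prove the equivalence by induction on $k$, simultaneously showing the three identities $x_k=\tilde{x}_k$, $y_k=\tilde{y}_k$, and $z_k=\tilde{z}_k$. The base case ($k=0$) is straightforward: by construction $\tilde{z}_0=x_0=z_0$, $\tilde{x}_0=\tilde{z}_0=x_0$, and since $u_0=0$ we get $\tilde{y}_0=\tilde{z}_0=x_0$; on the Algorithm~\ref{algo:initial} side, $y_0=(1-\theta_0)x_0+\theta_0 z_0=x_0$ because $x_0=z_0$.

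For the inductive step, first I would verify that $t_k^{(i)}$ in Algorithm~\ref{algo:w-simple} coincides with the increment $z_{k+1}^{(i)}-z_k^{(i)}$ in Algorithm~\ref{algo:initial}. In step~7 of Algorithm~\ref{algo:initial}, the gradient term $\langle\nabla_i f(y_k),z_k^{(i)}-y_k^{(i)}\rangle$ is constant in the decision variable, so substituting $z=z_k^{(i)}+t$ and dropping the constant turns that subproblem into exactly the subproblem defining $t_k^{(i)}$, \emph{provided} the gradient is evaluated at the same point. By the inductive hypothesis $y_k=\tilde{y}_k=\theta_k^2 u_k+\tilde{z}_k$, so the two evaluation points agree. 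Hence $t_k^{(i)}=z_{k+1}^{(i)}-z_k^{(i)}$ for $i\in S_k$, and the identity $z_{k+1}=\tilde{z}_{k+1}$ follows immediately (the blocks outside $S_k$ are untouched in both algorithms).

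Next I would compute $\tilde{x}_{k+1}=\theta_k^2 u_{k+1}+\tilde{z}_{k+1}$ using the update rules of Algorithm~\ref{algo:w-simple}. Writing $t_k\eqdef\sum_{i\in S_k}U_i t_k^{(i)}$, the $u$-update gives $\theta_k^2 u_{k+1}=\theta_k^2 u_k-(1-\tfrac{n}{\tau}\theta_k)t_k$, while $\tilde{z}_{k+1}=\tilde{z}_k+t_k$. Summing yields
\[
\tilde{x}_{k+1}=\theta_k^2 u_k+\tilde{z}_k+\tfrac{n}{\tau}\theta_k\, t_k=\tilde{y}_k+\tfrac{n}{\tau}\theta_k\,t_k,
\]
which matches step~9 of Algorithm~\ref{algo:initial} since $z_{k+1}-z_k=t_k$ and $y_k=\tilde{y}_k$ by the inductive hypothesis. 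So $x_{k+1}=\tilde{x}_{k+1}$.

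Finally, to verify $y_{k+1}=\tilde{y}_{k+1}$, I would substitute $x_{k+1}=\theta_k^2 u_{k+1}+\tilde{z}_{k+1}$ and $z_{k+1}=\tilde{z}_{k+1}$ into step~3 of Algorithm~\ref{algo:initial}, obtaining $y_{k+1}=(1-\theta_{k+1})\theta_k^2 u_{k+1}+\tilde{z}_{k+1}$. Comparing with $\tilde{y}_{k+1}=\theta_{k+1}^2 u_{k+1}+\tilde{z}_{k+1}$ reduces the problem to the identity $(1-\theta_{k+1})\theta_k^2=\theta_{k+1}^2$, which is exactly \eqref{eq:theta_id} from Lemma~\ref{lem:theta}. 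The only real subtlety in the whole argument is recognizing the algebraic role of this $\theta$-recurrence: the coupling constant $\frac{1-(n/\tau)\theta_k}{\theta_k^2}$ in the $u$-update is reverse-engineered precisely so that the change of variables $x_k=\theta_{k-1}^2 u_k+\tilde{z}_k$, $y_k=\theta_k^2 u_k+\tilde{z}_k$ is preserved by one step, and this closes the induction.
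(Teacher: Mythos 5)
Your proposal is correct and follows essentially the same route as the paper's appendix proof: induction on $k$, identifying $t_k^{(i)}$ with $z_{k+1}^{(i)}-z_k^{(i)}$ via the change of variables $z=z_k^{(i)}+t$ in the proximal subproblem (using $y_k=\tilde y_k$ to match the gradient evaluation points), then the same algebra for $\tilde x_{k+1}$ and the identity $(1-\theta_{k+1})\theta_k^2=\theta_{k+1}^2$ from \eqref{eq:theta_id} to close the loop on $\tilde y_{k+1}$. No gaps.
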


Note that in Algorithm~\ref{algo:w-simple} we never need to form $x_k$ throughout the iterations. The only time this is needed is when producing the output: $x_{k+1} = \theta_k^2 \pp_{k+1} + z_{k+1}$.    More importantly, note that the method
does need to  explicitly compute $y_k$. Instead, we introduce a new vector, $\pp_k$, and express $y_k$ as $y_k = \theta_k^2 \pp_k + \tilde{z}_k$. Note that the method accesses $y_k$ only via the block-gradients $\nabla_i f(y_k)$ for $i \in S_k$. Hence, if it is possible to cheaply compute these gradients \emph{without} actually forming $y_k$, we can avoid full-dimensional operations.

We now show that this can be done for functions $f$ of the form \eqref{eq:shs5s9s}, where $f_j$ is as in Theorem~\ref{thm:compute_Lip}: \begin{equation}\label{eq:sjstiuy}
f(x)=\textstyle{\sum_{j=1}^m \phi_j(e_j^T A x)}.\end{equation}

Let $D_i$ be the set of such $j$ for which $A_{ji}\neq 0$. If we write $r_{\pp_k}=A \pp_k$ and $r_{\tilde{z}_k} = A\tilde{z}_k$, then using \eqref{eq:sjstiuy} we can  write
\begin{equation}\label{eq:nmasgsa89}
 \textstyle{ \nabla_i f(\theta_k^2 \pp_k+\tilde{z}_k) = \sum_{j \in D_i} A_{ji}^T \phi'_j(\theta_k^2 r_{\pp_k}^j + r_{\tilde{z}_k}^j ).}
\end{equation}

Assuming we store and maintain the residuals $r_{\pp_k}$ and $r_{\tilde{z}_k}$, the computation of the product $A_{ji}^T\phi_j'(\cdot)$ costs ${\cal O}(N_i)$ (we assume that the evaluation of the univariate derivative $\phi_j'$ takes ${\cal O}(1)$ time), and hence the computation of the  block derivative \eqref{eq:nmasgsa89} requires ${\cal O}(|D_i|N_i)$ arithmetic operations. Hence on average, computing all block gradients for $i \in S_k$ will cost
\[ \textstyle{C = \Exp\left[ \sum_{i\in \hat{S}} {\cal{O}}(|D_i|N_i) \right] = 
\frac{\tau}{n}\sum_{i=1}^n {\cal O}(|D_i|N_i).}\]

This will be small if $|D_i|$ are small and $\tau$ is small. For simplicity, assume all blocks are of equal size, $N_i=b = N/n$. Then
\[ \textstyle{C =  \frac{b\tau}{n} \times {\cal O}\left(\sum_{i=1}^n|D_i|\right) = \frac{b\tau}{n} \times {\cal O}\left(\sum_{j=1}^m\omega_j\right) = \frac{b\tau m }{n}{\cal O}(\bar{\omega}) =\tau \times {\cal O}\left(\frac{b  m \bar{\omega}}{n}\right).}\]

It can be easily shown that the maintenance of the residual vectors $r_{\pp_k}$ and $r_{\tilde{z}_k}$ takes the same amount of time ($C$) and hence the total work per iteration is $C$. In many practical situations, $m \leq n$, and often $m\ll n$ (we focus on this case in the paper since usually this corresponds to $f$ not being strongly convex) and $\bar{\omega}=O(1)$. This then means that $C=\tau \times {\cal O}(b)$. That is, each of the $\tau$ processors do work proportional to the size of a single block per iteration.

The favorable situation described above is the consequence of the block sparsity of the data matrix $A$ and does not depend on $\phi_j$ insofar as the evaluation of its derivative takes ${\cal O}(1)$ work. Hence, it applies to convex quadratics ($\phi_j(s)=s^2$), logistic regression ($\phi_j(r)=\log(1+\exp(s))$) and also to the smooth approximation $f_\mu(x)$ of $f(x)=\|Ax-b\|_1$, defined by
\[
 f_{\mu}(x)=\sum_{j=1}^m \norm{e_j^T A}_{w^*}^* \psi_\mu \left( \frac{\abs{e_j^T A x - b_j}}{ \norm{e_j^T A}_v^*} \right), \qquad \psi	_\mu(t)=\begin{cases}  \frac{t^2}{2\mu}, & 0 \leq t \leq \mu, \\ t - \frac{\mu}{2}, & \mu \leq t, \end{cases}
\]
with smoothing parameter $\mu>0$, as considered in \cite{Nesterov05:smooth, FR:spcdm}. Vector $w^*$ is as defined in \cite{FR:spcdm}; $\|\cdot\|_v$ is a weighted norm in $\R^m$.


\section{Numerical experiments}
\label{SEC:NUMERICS}

In all tests we used a shared-memory workstation with 32 Intel Xeon processors at 2.6~GHz and 128~GB RAM. In the experiments, we have departed from the theory in two ways: i) our implementation of APPROX is \emph{asynchronous} in order to limit communication costs, and ii) we approximated the $\tau$-nice sampling by a $\tau$-independent sampling as in~\cite{RT:PCDM} (the latter is very easy to generate in parallel; please note that our analysis can be very easily extended to cover the $\tau$-independent sampling). For simplicity, in all tests we assume all blocks are of size 1 ($N_i=1$ for all $i$). However, further speedups can be obtained by working with larger block sizes as then each processor is better utilized.


\subsection{The effect of new stepsizes}

In this experiment, we compare the performance of the new stepsizes ( introduced in Section~\ref{sec:neweso}) with those proposed  in~\cite{RT:PCDM} (see Table~\ref{tbl:more_stepsizes}).
We generated random instances of the $L_1$-regularized least squares problem (LASSO), 
\[f(x)=\frac{1}{2}\|Ax-b\|^2, \qquad \Reg(x) = \lambda \|x\|_1,\]
with various distributions of the separability degrees $\omega_j$ (= number of nonzero elements on the $j$th row of $A$) and studied the weighted distance to the optimum $ \norm{x_*-x_0}_{\vv}$ for the initial point $x_0=0$. This quantity appears in the complexity estimate \eqref{eq:k_0000} and depends on $\tau$ (the number of processors). We chose a random matrix of small size: $N=m=1000$ as this is sufficient to make our point, and consider $\tau\in \{10,100,1000\}$.

In particular, we consider three different  distributions of $\{\omega_j\}$: uniform, intermediate and extreme. The results are summarized in Table~\ref{tab:omegaunif}. 
First, we generated a \emph{uniformly} sparse matrix with $\omega_j=30$ for all $j$. In this case, $\vv^{\text{fr}}=\vv^{\text{rt}}$, and hence the results are the same.  We then generated an \emph{intermediate} instance, with $\omega_j = 1+\floor{30 j^2/m^2}$. The matrix has many rows with a few nonzero elements
and some rows with up to 30 nonzero elements. Looking at the table, clearly, the new stepsizes are better. The improvement is moderate when there are a few processors, but for $\tau=1000$, the complexity is 25\% better.
Finally, we generated a rather \emph{extreme} matrix with $\omega_1=500$ and $\omega_j=3$ for $j>1$. 
We can see  that the new stepsizes are much better, even with few processors, and can lead to $5\times$ speedup.

\begin{table}[h]
\centering
\begin{tabular}{|r|r|r|r|r|r|r|}
\hline
& \multicolumn{2}{ |c| }{Uniform} & \multicolumn{2}{ |c| }{Intermediate} & \multicolumn{2}{ |c| }{Extreme}\\
$\tau$ & $\norm{x^*}_{\vv^{\text{fr}}}$  & $ \norm{x^*}_{\vv^{\text{rt}}}$ & $\norm{x^*}_{\vv^{\text{fr}}}$  & $ \norm{x^*}_{\vv^{\text{rt}}}$ & $\norm{x^*}_{\vv^{\text{fr}}}$  & $ \norm{x^*}_{\vv^{\text{rt}}}$\\
\hline
10 &  10.82 & 10.82 &   6.12 & 6.43 & 2.78 & 5.43\\
\hline
100 & 19.00 & 19.00 & 9.30 & 11.38 & 4.31 & 16.08\\
\hline
1000 & 52.49 & 52.49 & 24.00 & 31.78 & 11.32 & 50.52\\
\hline
\end{tabular}
\caption{\footnotesize Comparison of ESOs in the uniform case}
\label{tab:omegaunif}
\end{table}

In the experiments above, we have first fixed a sparsity pattern and then generated a \emph{random} matrix $A$ based on it. However, much larger differences can be seen  for special matrices $A$. We shall now comment on this.

Consider the case $\tau=n$. In view of \eqref{eq:sks9s6s5}, the complexity of APPROX is proportional to $\|\vv\|_1$. 
Fix $\omega$ and $\omega_1,\dots,\omega_j$ and let us ask the question: for what data matrix $A$ will the ratio $\theta = \|\vv^{\text{rt}}\|_1/\|\vv^{\text{fr}}\|_1$ be maximized? Since $\|\vv^{\text{rt}}\|_1 = \omega \sum_j \|A_{j:}\|^2$ and $\|\vv^{\text{fr}}\|_1 = \sum_{j} \omega_j \|A_{j:}\|^2$, we the maximal ratio is given by

\[\max_{A} \theta \eqdef \max_{\alpha \geq 0} \left\{\omega \sum_{j=1}^m \alpha_j \;:\; \sum_{j=1}^m \omega_j \alpha_j\leq 1\right\} = \max_{j} \frac{\omega}{\omega_j}.\]

The extreme case is attained for some matrix with at least one  dense row ($\omega_j$) and one maximally sparse row ($\omega_j=1$), leading to $\theta=n$. So, there are instances for which the new stesizes can lead to an up to $n\times$ speedup for APPROX when compared to the stepsizes $\vv^{\text{rt}}$. Needless to say, these extreme instances are artificially constructed. 

\subsection{L1-regularized L1 regression}

We consider the data given in the dorothea dataset~\cite{platt199912}. It is a sparse moderate-sized feature matrix $A$ with $m$=800, $N$=100,000, $\omega$=6,061 and a vector $b \in \R^m$. We wish to find $x \in \mathbb{R}^N$ that  minimizes \[\norm{A x-b}_1+\lambda \norm{x}_1\] with $\lambda=1$.
Because the objective is nonsmooth and non-separable, we apply the smoothing technique presented in~\cite{Nesterov05:smooth} for the first part of the objective and use the smoothed parallel coordinate descent method proposed in \cite{FR:spcdm} (this methods needs special stepsizes which are studied in that paper). The level of smoothing depends on the expected accuracy: we chose $\epsilon=0.1$, which corresponds to 0.0125\% of the initial value. 

We compared 4 algorithms (see  Figure~\ref{fig:l1regul-l1regress}), all run with 4 processors.
As one can see, the coordinate descent method is very efficient on this problem. 
However, the accelerated coordinate descent is still able to outperform it. As the problem is of small size (which is sufficient for the sake of comparison), we could compute  the optimal solution using an interior point method for linear programming and compare the value at each iteration to the optimal value (Table~\ref{tab:l1reg}).
Each line of the table gives the time needed by APPROX and PCDM to reach a given accuracy target. In the beginning (until $F(x_k)-F(x^*)<6.4$), the algorithms 
are in a transitional phase. Then, when one runs the algorithm twice as long, $F(x_k)-F(x^*)$ is divided by 2 for SPCDM
and by 4 for APPROX. This highlights the difference in the convergence speeds: $O(1/k)$ compared to $O(1/k^2)$.
As a result, APPROX gives an $\epsilon$-solution in 156.5 seconds while SPCDM
has not finished yet after 2000 seconds. 

\begin{figure}
\centering
\includegraphics[width=22em]{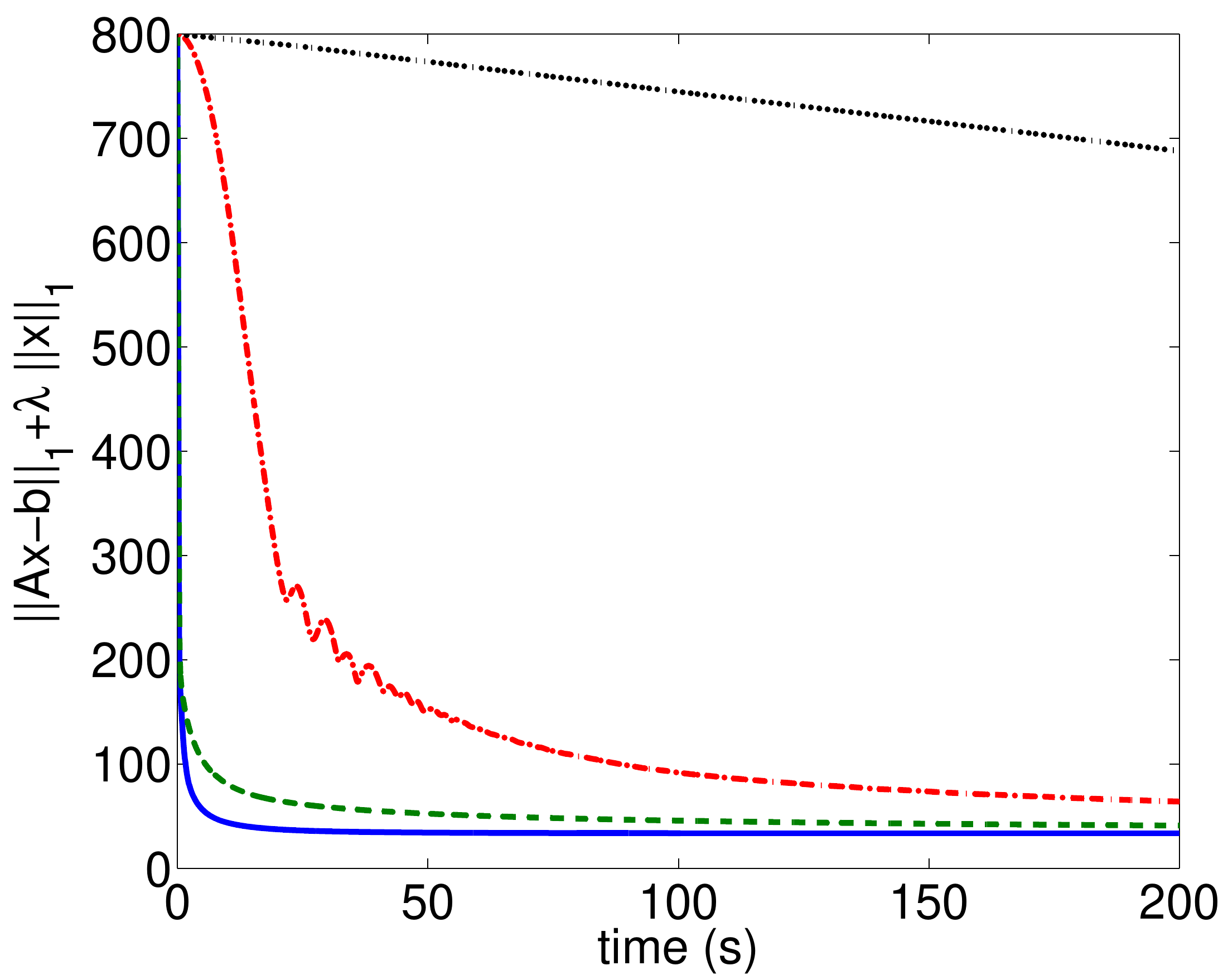}
\caption{\footnotesize Comparison of four algorithms for  $L_1$ regularized $L_1$ regression on the \texttt{dorothea} dataset:
gradient method (dotted black line), accelerated gradient method (\cite{Nesterov05:smooth}, dash-dotted red line), smoothed parallel  coordinate descent method (SPCDM~\cite{FR:spcdm}, dashed green line) and 
APPROX with stepsizes $\vv^{\text{fr}}$ (solid blue line). 
}
\label{fig:l1regul-l1regress}
\end{figure}

\begin{table}
\centering
\begin{tabular}{|r|r|r|}
\hline 
$F(x_k)-F(x_*)$ &  APPROX  & SPCDM \\
\hline
 409.6  & 0.2 s &  0.2 s \\
 204.8 & 0.3 s &  0.4 s \\
 102.4 & 1.0 s &    2.3 s \\
  51.2 & 2.2 s &  8.8 s \\ 
  25.6 & 4.5 s & 29.2 s \\
 12.8 & 8.3 s & 93.4 s \\  
6.4 & 14.4 s & 246.6 s \\
3.2 & 22.8 s & 562.3 s \\ 
 1.6 &  34.4  s &  1082.1 s  \\
   0.8 & 50.1 s &   1895.3 s\\
 0.4 & 71.8 s &  $>$2000 s\\
   0.2 & 103.4 s &  $>$2000 s\\
   0.1 &  156.5 s &   $>$2000 s\\         
\hline              
\end{tabular}
\caption{\footnotesize Comparison of objective decreases for APPROX and smoothed parallel coordinate descent (SPCDM) on a problem with $F(x)=\norm{A x-b}_1+\lambda \norm{x}_1$.}
\label{tab:l1reg}
\end{table}

\subsection{Lasso}

We now consider $L_1$ regularized least squares regression on the \texttt{KDDB} dataset~\cite{platt199912}. 
It consists of a medium size sparse feature matrix $A$ with $m=29,890,095$, $N=19,264,097$ and $\omega=75$, and a vector $b \in \R^m$. We wish to find $x \in \R^N$ that  minimizes \[F(x)=\frac{1}{2}\norm{A x-b}^2+\lambda \norm{x}_1\] with $\lambda=1$.

We compare APPROX (Algorithm~\ref{algo:w-simple}) with the (non-accelerated)
parallel coordinate descent method (PCDM~\cite{RT:PCDM}) in Figure~\ref{fig:lasso_kddb}, both run with $\tau=16$ processors.

Both algorithms converge quickly. PCDM is faster in the beginning because each iteration is half as expensive. However, APPROX is faster afterwards. For this problem, the optimal value is not known so it is  difficult to compare the actual accuracy.

Let us remark that an important feature of the $L_1$-regularization is that it promotes sparsity in the optimization variable $x$.
As APPROX only involves proximal steps on the $z$ variable, only $z_k$ is encouraged to be  sparse but not $x_k$, $y_k$ or $\pp_k$. A possible way to obtain a  sparse solution with APPROX is to first compute $x_k$ and then post-process with a few iterations of a sparsity-oriented method (such as iterative hard thresholding, full proximal gradient descent or  cyclic/randomized coordinate descent).

\begin{figure}
\centering
\includegraphics[width=22em]{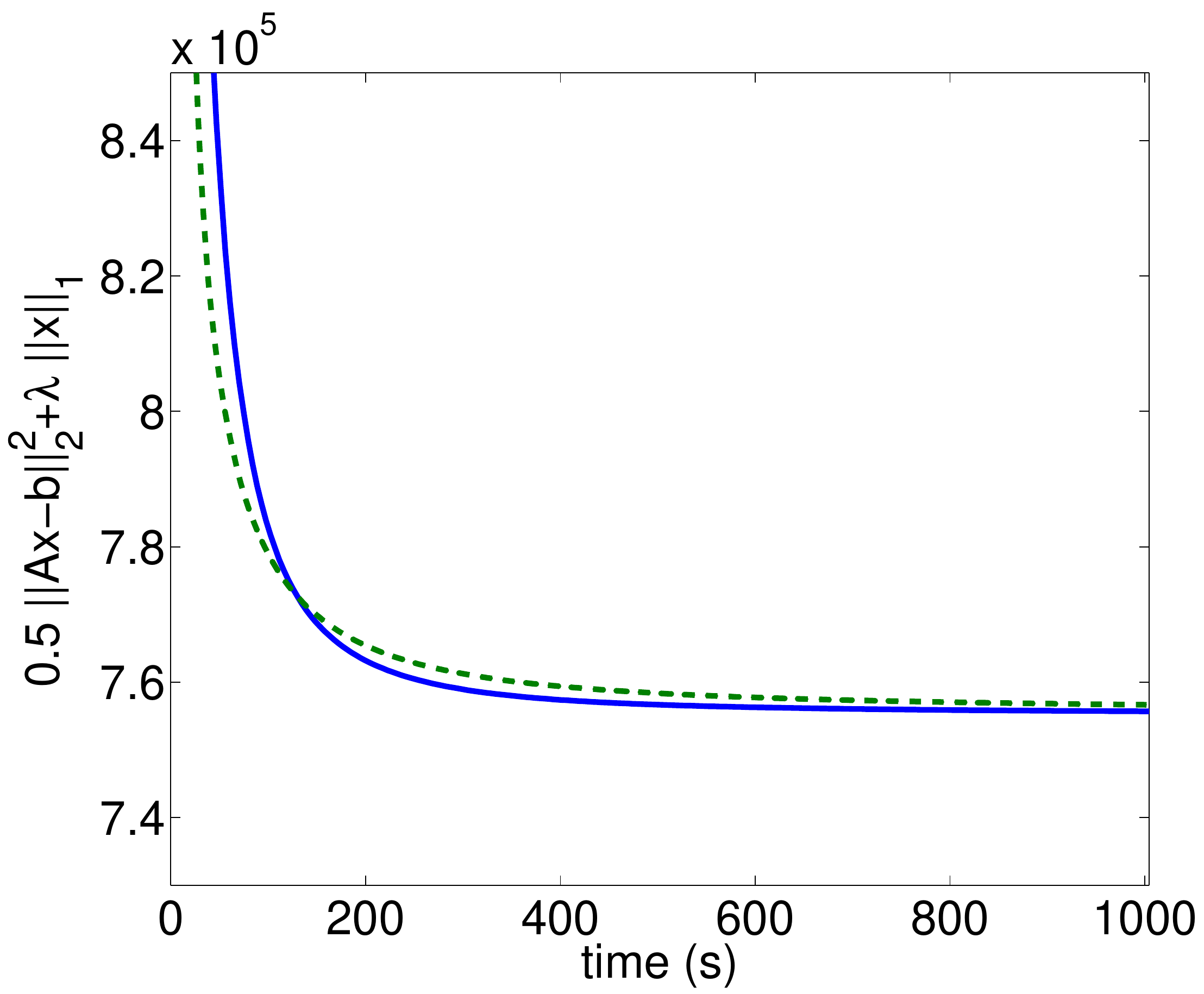}
\caption{\footnotesize Comparison of PCDM and APPROX for $l_1$ regularized least squares  on the kddb dataset.
As the decrease is very big in the first seconds (from 8.3 $10^8$ to 8.5 $10^5$), we present a zoom for $7.3 \leq F(x) \leq 8.5$.
Randomized coordinate descent~\cite{RT:PCDM}: dashed green line.
Accelerated coordinate descent (Algorithm~\ref{algo:w-simple}): solid blue line. 
}
\label{fig:lasso_kddb}
\end{figure}

\subsection{Training linear support vector machines}

Our last experiment is the dual of Support Vector Machine problem~\cite{ShalevTewari09}. For the dual SVM, the coordinates correspond to examples. 

We use the Malicious URL dataset~\cite{platt199912} with data matrix $A$ of size $m=2,396,130$, $N=3,231,961$ and a vector $b \in \mathbb{R}^N$. 
Here $\omega=n$ (and hence the data set is not particularly suited for parallel coordinate descent methods) but the matrix is still sparse (nnz=$277,058,644 \ll mn$).

We wish to find $x \in [0,1]^N$ that  minimizes 
\[F(x)=\frac{1}{2 \lambda N^2}\sum_{j=1}^m \left(\sum_{i=1}^N b_i A_{ji}x_i\right)^2 - \frac{1}{N} \sum_{i=1}^N x_i + I_{[0,1]^N}(x),\] with $\lambda=1/N$.
We compare APPROX (Algorithm~\ref{algo:w-simple}) with  Stochastic Dual Coordinate Ascent (SDCA~\cite{ShalevTewari09,takac2013mini}); the results are in Figure~\ref{fig:svm_url}. We have used a single processor only ($\tau=1$).

For this problem, one can recover a primal solution~\cite{ShalevTewari09} and thus we can  compare the decrease in the duality gap; summarized in Table~\ref{tab:dualsvm}. One can see that APPROX is about twice as fast as SDCA on this instance.

\begin{figure}
\centering
\includegraphics[width=22em]{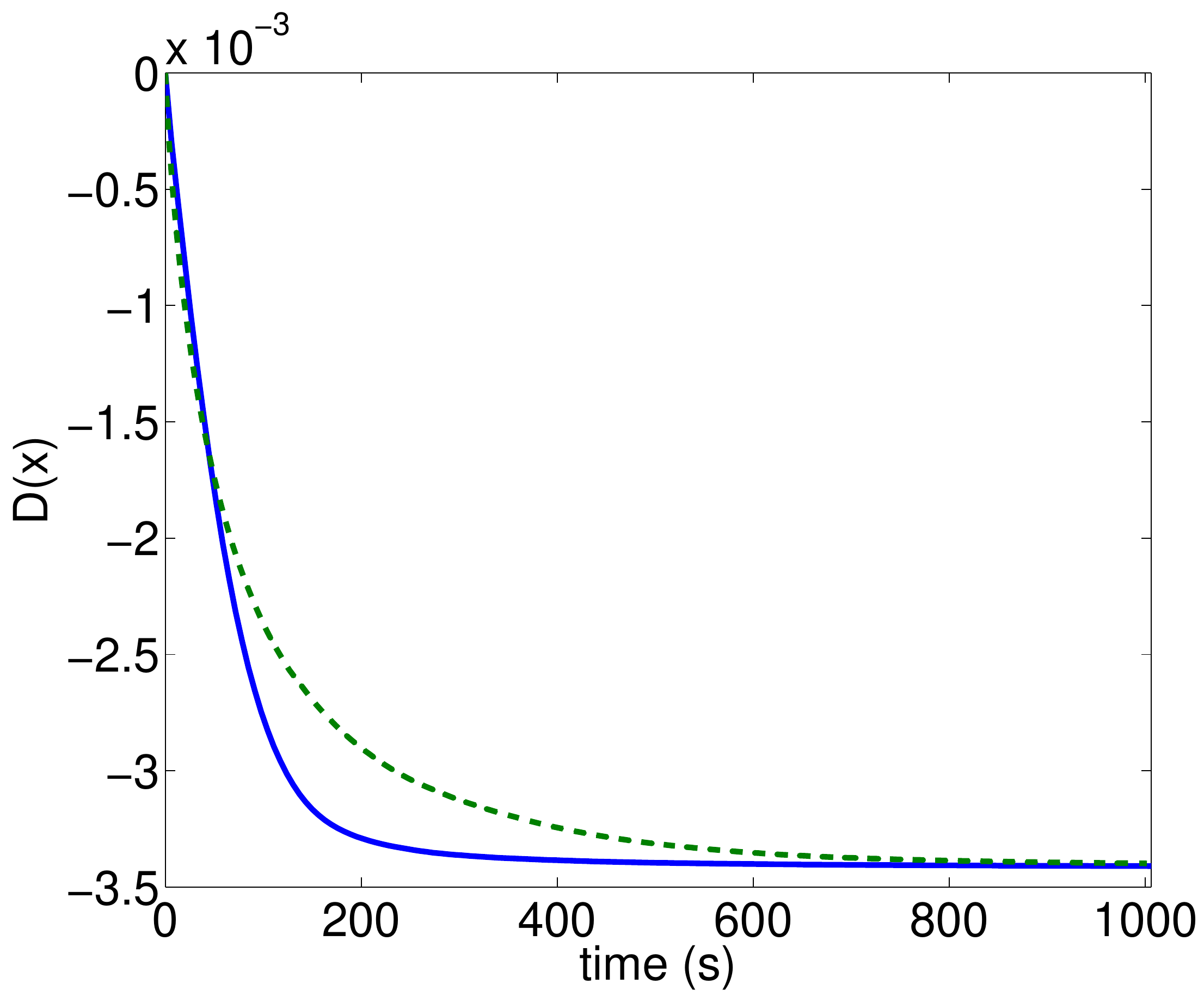}
\caption{\footnotesize Comparison of PCDM and APPROX for the dual of the Support Vector Machine problem on the Malicious URL dataset.
Randomized coordinate descent~\cite{RT:PCDM}: dashed green line.
Accelerated coordinate descent (Algorithm~\ref{algo:w-simple}): solid blue line. 
}
\label{fig:svm_url}
\end{figure}

\begin{table}
\centering
\begin{tabular}{|r|r|r|}
\hline 
Duality gap &  APPROX  & SDCA \\
\hline
 0.0256 &  33 s   &  26    s \\
  0.0128 &   59 s &   97  s \\
 0.0064 & 91 s &  206 s \\
 0.0032   &  137 s   & 310 s \\
 0.0016 & 182 s & 452 s \\
  0.0008    &   273 s  &  606  s \\
 0.0004  &   407 s& 864  s \\
  0.0002 &  614 s &  1148 s \\
 0.0001  &   954 s & 1712 s \\
\hline              
\end{tabular}
\caption{\footnotesize Decrease of the duality gap for accelerated parallel coordinate descent (APPROX) and stochastic dual coordinate ascent (SDCA).}
\label{tab:dualsvm}
\end{table}

\newpage
\section{Conclusion}

In summary, we  proposed APPROX: a  stochastic coordinate descent method combining the following \emph{four acceleration strategies:}

\begin{enumerate}
\item Our method is \emph{accelerated}, i.e., it achieves a $O(1/k^2)$ convergence rate. Hence, the method is better able to obtain a high-accuracy solution on non-strongly convex problem instances.
\item Our method is \emph{parallel}. Hence, it is able to better utilize modern parallel computing architectures and effectively taming the problem dimension $n$. 
\item We have proposed new \emph{longer  stepsizes} for faster convergence on functions whose degree of separability $\omega$ is larger than their  degree of separability $\bar{\omega}$.
\item We have shown that our method can be implemented \emph{without the need to perform full-dimensional vector operations.}
\end{enumerate}

\bibliographystyle{plain}
\bibliography{literature}

\appendix

\section{Proof of Proposition~\ref{prop:equivalence2} (equivalence)}
\label{SEC:EQUIVALENCE}

It is straightforward to see that  $x_0=y_0=z_0=\tilde{x}_0=\tilde{y}_0=\tilde{z}_0$ and hence the statement holds for $k=0$. By induction, assume it holds for some $k$.  Note that for $i \notin S_k$, $\tilde{z}_{k+1}^{(i)} = \tilde{z}_{k}^{(i)} = z_k^{(i)} = z_{k+1}^{(i)}$. If $i \in S_k$, then

\begin{equation}\label{eq:sjssjs8}\tilde{z}_{k+1}^{(i)} = \tilde{z}_k^{(i)} + t_k^{(i)},\end{equation}
where \begin{eqnarray}t_k^{(i)} &=& \arg\min_{t \in \R^{N_i}} \left\{ \langle \nabla_i f (\theta_k^2 \pp_k + \tilde{z}_k), t \rangle + \frac{n\theta_k \vv_i}{2\tau}  \|t\|_{(i)}^2 + \Reg_i(\tilde{z}_{k}^{(i)}+t)\right\}\notag\\
&\overset{\eqref{eq:yyyy}}{=}&\arg\min_{t \in \R^{N_i}} \left\{ \langle \nabla_i f (\tilde{y}_k), t \rangle + \frac{n\theta_k  \vv_i}{2\tau}  \|t\|_{(i)}^2 + \Reg_i(\tilde{z}_{k}^{(i)}+t)\right\}\notag\\
&=&\arg\min_{t \in \R^{N_i}} \left\{ \langle \nabla_i f (y_k), t \rangle + \frac{n\theta_k \vv_i}{2\tau}  \|t\|_{(i)}^2 + \Reg_i(z_{k}^{(i)}+t)\right\}\notag\\
&=& -z_k^{(i)} + \arg\min_{z \in \R^{N_i}} \left\{ \langle \nabla_i f (y_k), z-z_k^{(i)} \rangle + \frac{n\theta_k \vv_i}{2\tau}  \|z-z_k^{(i)}\|_{(i)}^2 + \Reg_i(z)\right\}\notag\\
&=& -z_k^{(i)} + \arg\min_{z \in \R^{N_i}} \left\{ \langle \nabla_i f (y_k), z-y_k^{(i)} \rangle + \frac{n\theta_k \vv_i}{2\tau}  \|z-z_k^{(i)}\|_{(i)}^2 + \Reg_i(z)\right\}\notag\\
&=& -z_k^{(i)} + z_{k+1}^{(i)}.\label{eq:shs7hs8s9s}
\end{eqnarray}
Combining \eqref{eq:sjssjs8} with \eqref{eq:shs7hs8s9s},  we get $\tilde{z}_{k+1}^{(i)} = \tilde{z}_k^{(i)}  -z_k^{(i)} + z_{k+1}^{(i)} = z_{k+1}^{(i)}$. Further, combining the two cases($i \in S_k$ and $i \notin S_k$), we arrive at
 \begin{equation}\label{eq:sjs8sjs}\tilde{z}_{k+1} = z_{k+1}.\end{equation} 

Now looking at the steps of Algorithm~\ref{algo:w-simple}, we see that
\begin{equation}\label{eq:isjs85s4}
\pp_{k+1}- \pp_k = -\frac{1-\tfrac{n}{\tau}\theta_k}{\theta_k^2}(\tilde{z}_{k+1}-\tilde{z}_k),
\end{equation}
and can thus write
\begin{eqnarray}
\tilde{x}_{k+1} & \overset{\eqref{eq:xxx}}{=} & \theta_{k}^2 \pp_{k+1} + \tilde{z}_{k+1}\notag\\
&\overset{\eqref{eq:isjs85s4}}{=}& \theta_k^2 \left(\pp_k - \frac{1-\frac{n}{\tau}\theta_k}{\theta_k^2}(\tilde{z}_{k+1}-\tilde{z}_k)\right) + \tilde{z}_{k+1}\notag\\
&=& \theta_k^2 \pp_k + \tilde{z}_k + \frac{n}{\tau}\theta_k (\tilde{z}_{k+1}-\tilde{z}_k)\notag\\
&\overset{\eqref{eq:yyyy}}{=}& \tilde{y}_k + \frac{n}{\tau}\theta_k (\tilde{z}_{k+1}-\tilde{z}_k)\notag\\
&\overset{\eqref{eq:sjs8sjs}}{=}&y_k + \frac{n}{\tau}\theta_k (z_{k+1}-z_k)\notag\\
&=& x_{k+1}.\label{eq:sjs8js}
\end{eqnarray}

Finally, 
\begin{eqnarray*}
\tilde{y}_{k+1} &\overset{\eqref{eq:yyyy}}{=}& \theta_{k+1}^2 \pp_{k+1}+ \tilde{z}_{k+1}\\
&\overset{\eqref{eq:xxx}}{=}& \frac{\theta_{k+1}^2 }{\theta_k^2}(\tilde{x}_{k+1}-\tilde{z}_{k+1}) + \tilde{z}_{k+1}\\
&\overset{\eqref{eq:theta_id}}{=}& (1-\theta_{k+1})(\tilde{x}_{k+1}-\tilde{z}_{k+1}) +\tilde{z}_{k+1}\\
&\overset{\eqref{eq:sjs8sjs}+\eqref{eq:sjs8js}}{=}& (1-\theta_{k+1})(x_{k+1}-z_{k+1}) + z_{k+1}\\
&=& y_{k+1},
\end{eqnarray*}
which concludes the proof.

\end{document}